\documentclass[12pt]{amsart}
\usepackage{amsmath}
\usepackage{amsfonts}
\usepackage{amssymb}
\usepackage{amsthm}
\usepackage{hyperref}

\oddsidemargin = -0in \evensidemargin = 0in \textwidth =6.5in
\textheight=9in \topmargin=0in

\theoremstyle{plain}
\newtheorem{theorem}{Theorem}[section]
\newtheorem{lemma}[theorem]{Lemma}

\theoremstyle{definition}

\theoremstyle{remark}

\numberwithin{equation}{section}

\author{Paul Jenkins}
\author{Kyle Pratt}
\title{Coefficient Bounds for Level 2 Cusp Forms and Modular Functions}

\begin{document}
\date{}
\maketitle

\begin{abstract}
We give explicit upper bounds for the coefficients of arbitrary weight $k$, level 2 cusp forms, making Deligne's well-known $O(n^{\frac{k-1}{2}+\epsilon})$ bound precise. We also derive asymptotic formulas and explicit upper bounds for the coefficients of certain level 2 modular functions.
\end{abstract}

\section{Introduction}
The Fourier coefficients of modular forms encode interesting arithmetic information. To give just three examples, the Fourier coefficients of modular forms are intimately connected to representations of integers as sums of squares \cite{Rou2}, Galois representations \cite{Rib}, and integer partitions (e.g. \cite{And}, Chapter 5). These rich interactions between modular forms and other branches of mathematics have given them a place of central importance in modern number theory.

It is natural to ask about the size of the coefficients of a modular form. The coefficients of cusp forms in particular have attracted a great deal of attention. Ramanujan \cite{Ram} studied the coefficients of $\Delta(z)$, the unique normalized cusp form of weight 12 for $\text{SL}_2(\mathbb{Z})$ given by
\begin{align*}
\Delta(z) = q\prod_{n=1}^\infty (1-q^n)^{24} = \sum_{n=1}^\infty \tau(n) q^n,
\end{align*}
where as usual $q = e^{2\pi i z}$. Ramanujan conjectured that
\begin{align*}
|\tau(n)| \leq d(n) n^{\frac{11}{2}},
\end{align*}
where $d(n)$ is the number of divisors of $n$. Petersson \cite{Pet1} generalized Ramanujan's conjecture to cusp forms for congruence subgroups of $\text{SL}_2(\mathbb{Z})$. The Ramanujan-Petersson conjectures were proved by Deligne \cite{Del} as a consequence of his work on the Weil conjectures. The corresponding conjectures for Maass forms and automorphic forms on $\text{GL}(n)$ for $n > 2$ remain unresolved (see \cite{BB} for details).

Deligne's result applies to newforms, certain cusp forms that are eigenforms for all of the Hecke operators (see Section \ref{Background section} for more details). For such a weight $k$ newform, Deligne's work implies that the coefficient of $q^n$ is bounded above by $d(n) n^{\frac{k-1}{2}}$. Any cusp form can be written as a linear combination of newforms and newforms acted on by various operators, so it is still the case that the coefficients of a general cusp form $f$ are $O(d(n) n^{\frac{k-1}{2}})$. However, the implied constant depends heavily on $f$, and it is a nontrivial problem to determine this constant (or even the size of this constant). Several researchers have studied the implied constant in $O(d(n) n^{\frac{k-1}{2}})$ for various families of cusp forms (see e.g. \cite{KR}, \cite{Rou1}, \cite{Rou2}). Rouse and the first author \cite{JR} gave an explicit bound on the implied constant for all cusp forms for $\text{SL}_2(\mathbb{Z})$ (earlier work of Chua \cite{Chu} makes Hecke's $O(n^{\frac{k}{2}}$ bound explicit). For a cusp form $G = \sum_{n=1}^\infty a(n)q^n$ of weight $k$ for $\text{SL}_2(\mathbb{Z})$, they proved that
\begin{align}\label{Jenkins Rouse Thm}
|a(n)| &\leq \sqrt{\log k} \left(11 \sqrt{\sum_{m=1}^\ell \frac{|a(m)|^2}{m^{k-1}}} + \frac{e^{18.72}(41.41)^{k/2}}{k^{\frac{k-1}{2}}} \cdot \left|\sum_{m=1}^\ell a(m) e^{-7.288m} \right| \right) \cdot d(n) n^{\frac{k-1}{2}},
\end{align}
where $\ell$ is the dimension of the weight $k$ cusp form space $S_k(\text{SL}_2(\mathbb{Z}))$. The fact that (\ref{Jenkins Rouse Thm}) incorporates the first $\ell$ coefficients of $G$ is natural, since these coefficients uniquely identify $G$ in $S_k(\text{SL}_2(\mathbb{Z}))$.

The main result of this paper makes Deligne's implied constant explicit for weight $k$ cusp forms for $\Gamma_0(2)$. To state our main theorem we define some notation. For a positive even integer $k$, write $k = 4\ell + k'$, where $k' \in \{0,2\}$, hence $\ell = \lfloor \frac{k}{4} \rfloor$. It is convenient to write $k$ in this form because then the dimension of $S_k(2)$ is $\ell - 1$. We define a function $B(k)$ by 
\begin{align*}
B(k) &= \frac{e^{5.449}(6.274)^k}{(\frac{k}{4}-1)^{\frac{k-1}{2}}} + \frac{e^{10.905}(4.793)^k}{\sqrt{(k-2)!}} + \frac{e^{6.511}(10.096)^k}{\sqrt{(k-2)!}}.
\end{align*}
We now state the main result of this paper.

\begin{theorem}\label{Main Theorem}
Let $k \geq 8$ be an even integer, and let $G$ be a cusp form of weight $k$ for $\Gamma_0(2)$. Write
\begin{align*}
G(z) = \sum_{n=1}^\infty a(n)q^n.
\end{align*}
Then
\begin{align*}
|a(n)| &\leq \sqrt{\log k} \left(103 \sum_{m=1}^{\ell-1} \frac{|a(m)|}{m^{\frac{k-1}{2}}} + B(k) \sum_{m=1}^{\ell-1} |a(m)| e^{-7.288 m} \right) d(n) n^{\frac{k-1}{2}}.
\end{align*}
\end{theorem}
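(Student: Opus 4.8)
The plan is to follow the template of \eqref{Jenkins Rouse Thm}, now over $\Gamma_0(2)$. Since $\dim S_k(\Gamma_0(2)) = \ell-1$ and a level $2$ cusp form is determined by its first $\ell-1$ Fourier coefficients, the first step is to construct the reduced row echelon (Miller-type) basis $g_1,\dots,g_{\ell-1}$ of $S_k(\Gamma_0(2))$, normalized so that $g_m = q^m + O(q^{\ell})$. This is explicit: one has the structural identity $S_k(\Gamma_0(2)) = \Delta_2\cdot M_{k-8}(\Gamma_0(2))$, where $\Delta_2(z) = \eta(z)^8\eta(2z)^8 = q\prod_{n\geq 1}(1-q^n)^8(1-q^{2n})^8$ is the weight $8$ cusp form (nonvanishing on $\mathbb{H}$, vanishing to order exactly $1$ at each cusp), so the construction reduces to producing the Miller basis of $M_{k-8}(\Gamma_0(2))$, a transparent computation in the two generators (weights $2$ and $4$) of the polynomial ring $M_*(\Gamma_0(2))$ together with the Hauptmodul $t(z) = \Delta(2z)/\Delta(z) = \eta(2z)^{24}/\eta(z)^{24}$. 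Writing $G = \sum_{m=1}^{\ell-1} a(m)\,g_m$, it then suffices to prove, for each $m$,
\[
\sum_j |c_{j,m}| \;\leq\; \sqrt{\log k}\Bigl(\tfrac{103}{m^{(k-1)/2}} + B(k)\,e^{-7.288 m}\Bigr),
\]
where $c_{j,m}$ are the coordinates of $g_m$ in the Hecke eigenbasis $\{F_j\}$; granting this, $|a(n)| = \bigl|\sum_m a(m)\sum_j c_{j,m}[F_j]_n\bigr|$ and the triangle inequality give the theorem once $|[F_j]_n|$ is bounded.

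For that last bound, decompose each $g_m$ in the Hecke eigenbasis of $S_k(\Gamma_0(2))$, which consists of the level $2$ newforms together with the level $1$ newforms $f$ and their dilates $f(2z)$. For $F_j(z) = f(dz)$ with $f$ a newform of level dividing $2$, Deligne's theorem gives $|[F_j]_n| = |[f]_{n/d}|\cdot\mathbf{1}_{d\mid n} \leq d(n)\,n^{(k-1)/2}$, so $|[g_m]_n| \leq \bigl(\sum_j|c_{j,m}|\bigr) d(n)\,n^{(k-1)/2}$ and the remaining task is the $n$-free estimate displayed above. The coordinates $c_{j,m}$ are the entries of the inverse of the matrix $M = \bigl([F_j]_i\bigr)_{1\leq i,j\leq \ell-1}$ of leading Fourier coefficients, and one would bound them via Cramer's rule together with Hadamard's inequality for the minors, where the Gram-type determinants that appear are evaluated using the Petersson trace formula for $\Gamma_0(2)$: its off-diagonal terms are Kloosterman sums weighted by $J_{k-1}$-Bessel values over moduli $c\equiv 0\pmod 2$, which one estimates explicitly and shows are negligible against the diagonal terms for all $k\geq 8$. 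This makes the Petersson norms in numerator and denominator cancel down to a bound of the shape $|c_{j,m}| \ll \sqrt{\Gamma(k-1)}\,\bigl((4\pi m)^{(k-1)/2}\|F_j\|\bigr)^{-1}$, after which summing over $j$ requires explicit \emph{lower} bounds for the Petersson norms $\|F_j\|^2$; through the Rankin--Selberg identity these are explicit lower bounds for $L(\mathrm{Sym}^2 F_j,1)\gg 1/\log k$, which is the source of the $\sqrt{\log k}$ and of the constant $103$ in the main term.

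The dilated oldforms $f(2z)$ need separate treatment, and they produce the correction term $B(k)\sum_m |a(m)|e^{-7.288 m}$. The level-raising operator $V_2:f(z)\mapsto f(2z)$ shrinks Petersson norms, $\|f(2z)\|^2 = 2^{-k}\|f(z)\|^2$, and destroys orthogonality with $f(z)$; passing to the Atkin--Lehner-adapted orthogonal basis of each two-dimensional oldform plane and tracking the resulting $2^{k/2}$ enhancements shows that the generic bound above overcounts the oldform contribution, whose true size is small and decays in $k$. To capture it one would bound the corresponding remainder of $[g_m]_n$ by a contour integral $\int_0^1(\cdots)\,e^{-2\pi i n z}\,dx$, shifting to the height $\operatorname{Im}(z)=7.288/(2\pi)\approx 1.16$, which lies well inside a fundamental domain for $\Gamma_0(2)$; there $\operatorname{Im}(z)^{k/2}|g_m(z)|$ is controlled by means of the Fricke involution $W_2:z\mapsto -1/(2z)$, explicit pointwise bounds for $\Delta_2$ and for modular forms of weight $\leq 6$ on $\Gamma_0(2)$, and the growth of $\Delta$ near the cusps. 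Evaluating a truncation of $g_m$ at that height yields the factor $e^{-7.288 m}$, while the three cases coming from the weight normalization $k'\in\{0,2\}$ and from the $W_2$-even versus $W_2$-odd parts produce the three summands of $B(k)$, accounting for the denominators $(\tfrac{k}{4}-1)^{(k-1)/2}$ and $\sqrt{(k-2)!}$.

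The main obstacle is quantitative rather than conceptual: one must make the symmetric-square $L$-value lower bounds and the Petersson-trace-formula error estimates simultaneously explicit and tight enough that the final constants are as small as stated, and one must control the interaction with $V_2$ precisely enough that the oldform contribution lands inside $B(k)$ instead of polluting the main term. A secondary nuisance is assembling the correct $\Gamma_0(2)$ version of the Petersson formula (the Kloosterman sums are restricted to moduli $c\equiv 0\pmod 2$) and checking that its error is genuinely smaller than the main term for \emph{every} $k\geq 8$, not merely asymptotically. Combined with the explicit description of the Miller basis of $M_{k-8}(\Gamma_0(2))$, these estimates yield the stated bound.
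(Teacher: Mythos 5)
Your overall architecture matches the paper's --- an echelon basis $q^m + O(q^\ell)$, a decomposition into level-2 newforms plus a $V_2$-orthogonalized oldform basis, lower bounds on Petersson norms via $L(\mathrm{Sym}^2,1) \gg 1/\log k$, and a contour at height $\operatorname{Im} z = 1.16$ producing the factor $e^{-7.288m}$. But your central quantitative step is different and, as proposed, breaks down. You bound the coordinates $c_{j,m}$ by Cramer's rule and Hadamard's inequality, with the determinant/Gram information supplied by the Petersson trace formula for $\Gamma_0(2)$, asserting that the off-diagonal Kloosterman--Bessel terms are negligible against the diagonal for all $k \geq 8$. They are not in the range you need: the indices run up to $\ell - 1 \approx k/4$ and the smallest admissible modulus is $c=2$, so the Bessel argument $4\pi\sqrt{mn}/c = 2\pi\sqrt{mn}$ reaches about $\pi k/2 \approx 1.57\,(k-1)$, well past the transition point of $J_{k-1}$. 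For roughly the top third of the index range the off-diagonal entries are only polynomially small in $k$, there are about $k/4$ of them per row, so the Gram matrix is not diagonally dominant and no lower bound on the determinant of the coefficient matrix --- hence no bound on its inverse --- comes out. This is exactly the difficulty the paper's argument is built to avoid: it never inverts the coefficient matrix. Instead it bounds $\langle F_{k,m},F_{k,m}\rangle$ from \emph{above} directly, writing $F_{k,m}$ via the Garthwaite--Jenkins generating function as a double integral at heights $v=1.16$, $y=0.865$, slashing by the three coset representatives of $\Gamma_0(2)$ in $\mathrm{SL}_2(\mathbb{Z})$ to get explicit (exponentially wasteful but sufficient) bounds on the transformed coefficients $A_k^{(i)}(m,n)$, and integrating over the fundamental domain; combined with the lower bound $\langle F_{k,m},F_{k,m}\rangle \geq \bigl(\sum_i c_i^2 + \sum_i d_i^2 + \sum_j e_j^2\bigr)h(k)$ from orthogonality, Cauchy--Schwarz yields $\sum_i|c_i|+\sum_i|d_i|+\sum_j|e_j|$ with no determinant estimate at all.

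A secondary point: your account of where the two terms of the final bound come from does not match how they actually arise. The split between $103/m^{(k-1)/2}$ and $B(k)e^{-7.288m}$ is not a newform-versus-oldform split; the factor $e^{-7.288m}=e^{-2\pi m (1.16)}$ multiplies every correction term in the upper bound for $\langle F_{k,m},F_{k,m}\rangle$, and the three summands of $B(k)$ come from the three coset integrals $I_1,I_2,I_3$ (equivalently the three coefficient bounds $A_k^{(1)},A_k^{(2)},A_k^{(3)}$), not from the cases $k'\in\{0,2\}$ or a Fricke-eigenvalue decomposition. To repair your argument you would need to replace the Cramer/trace-formula step with an upper bound on $\langle g_m,g_m\rangle$ of the above integral type (your $\Delta_2\cdot M_{k-8}(\Gamma_0(2))$ description of the basis is fine as a starting point for that), or else find a genuine lower bound for the coefficient determinant, which is a much harder problem than the one you are trying to solve.
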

\noindent The condition $k \geq 8$ is not a restriction at all, since $S_k(2)= \{0\}$ for $k < 8$. 

The proof of Theorem \ref{Main Theorem} is similar to the proof of (\ref{Jenkins Rouse Thm}). We study the basis of cusp forms for $S_k(2)$ given by
\begin{align*}
F_{k,m}(z) = q^m + \sum_{n = \ell}^\infty A_k(m,n) q^n,
\end{align*}
with $1 \leq m \leq \ell - 1$. This basis is useful because, for any given $G \in S_k(2)$, it is trivial to write $G$ in terms of the basis elements. Hence, Theorem \ref{Main Theorem} follows from suitable bounds on the coefficients of $F_{k,m}$. We write $F_{k,m}$ as
\begin{align*}
F_{k,m} = \sum_i \alpha_i f_i + \sum_j \beta_j g_j,
\end{align*}
where $\alpha_i,\beta_j \in \mathbb{R}$, the $f_i$ are oldforms, and the $g_j$ are newforms. We choose the $f_i$ so that their coefficients are bounded in absolute value by $C d(n) n^{\frac{k-1}{2}}$, for $C \geq 1$ an absolute constant. It follows that the coefficients of $F_{k,m}$ are bounded above by
\begin{align*}
C \left(\sum_i |\alpha_i| + \sum_j |\beta_j| \right) d(n) n^{\frac{k-1}{2}}.
\end{align*}
By the Cauchy-Schwarz inequality, it suffices to get an upper bound on $\sum_i \alpha_i^2 + \sum_j \beta_j^2$.

Letting $\langle \cdot , \cdot \rangle$ denote the Petersson inner product (see Section \ref{Background section} for the definition), we have that
\begin{align*}
\langle F_{k,m}, F_{k,m} \rangle &= \sum_i \alpha_i^2 \langle f_i,f_i \rangle + \sum_j \beta_j^2 \langle g_j,g_j \rangle
\end{align*}
for appropriate choice of $f_i$. Using the fact that $\langle f_i,f_i \rangle, \langle g_j,g_j \rangle$ are multiples of special values of $L$-functions, we obtain a lower bound on $\langle F_{k,m},F_{k,m} \rangle$ of the form
\begin{align*}
\langle F_{k,m}, F_{k,m} \rangle &\geq \left(\sum_i \alpha_i^2 + \sum_j \beta_j^2 \right) h(k),
\end{align*}
for some function $h$. Therefore, we require an upper bound on $\langle F_{k,m}, F_{k,m} \rangle$, which involves bounding several integrals. Bounding the integrals requires upper bounds on the coefficients of $F_{k,m}$ when $F_{k,m}$ is acted on by various matrices in $\text{SL}_2(\mathbb{Z})$. By using the generating function for $F_{k,m}$ given in \cite{GJ}, we obtain bounds of the form 
\begin{align*}
c_1 \cdot c_2^{\ell} e^{c_3 m + c_4 n},
\end{align*}
with $c_1,c_2>0$ and $0 < c_4 < \frac{\sqrt{3}}{2}$. While these bounds are poor, they are sufficient to obtain a reasonable upper bound on $\langle F_{k,m}, F_{k,m} \rangle$.

In addition to cusp forms, we also study the coefficients of weight zero modular functions for $\Gamma_0(2)$. In general, the coefficients of modular functions grow much faster than the coefficients of cusp forms. The classical example of a modular function is the $j$-function, which is modular of weight zero for $\text{SL}_2(\mathbb{Z})$. The $j$-function has a Fourier expansion of the form
\begin{align*}
j(z) = q^{-1} + 744 + \sum_{n=1}^\infty c(n) q^n,
\end{align*}
with the $c(n)$ positive integers. Petersson \cite{Pet2} and Rademacher \cite{Rad} independently obtained an asymptotic formula for $c(n)$. They found that 
\begin{align}\label{j coeff asymp}
c(n) \sim \frac{1}{\sqrt{2} \ n^{3/4}} e^{4\pi \sqrt{n}}.
\end{align}
Since the coefficients of cusp forms are $O(n^{\frac{k-1}{2} + \epsilon})$, we see that the coefficients of $j$ dwarf the coefficients of any cusp form when $n$ is large.

The asymptotic formula (\ref{j coeff asymp}) gives the true order of magnitude of $c(n)$, but often we are interested in explicit upper bounds as well. In 1975 Hermann \cite{Her} established that
\begin{align*}
c(n) \leq 6 e^{4\pi \sqrt{n}},
\end{align*}
while the state-of-the-art result of Brisebarre and Philibert \cite{BP} yields the asymptotically sharp bound
\begin{align}\label{BP j bound}
c(n) = \frac{1}{\sqrt{2} n^{3/4}} e^{4\pi \sqrt{n}} \left(1 - \frac{3}{32\pi \sqrt{n}} + \varepsilon_n \right), \ |\varepsilon_n| \leq \frac{0.055}{n}.
\end{align}
The proof of (\ref{Jenkins Rouse Thm}) makes use of (\ref{BP j bound}) to bound the tails of certain infinite series. Similarly, our proof of Theorem \ref{Main Theorem} requires bounds on the coefficients of certain modular functions for $\Gamma_0(2)$. These modular functions are actually Hauptmoduln for $\Gamma_0(2)$ (see Section \ref{Background section}). We denote these modular functions by $\psi$ and $\phi$, defined as
\begin{align*}
\psi(z) &= \frac{\Delta(z)}{\Delta(2z)} = q^{-1} - 24 + \sum_{n=1}^\infty s(n) q^n, \\
\phi(z) &= \frac{1}{\psi(z)} = \sum_{n=1}^\infty b(n) q^n. \notag
\end{align*}
Here the $s(n)$ and $b(n)$ are integers, and one can show the $b(n)$ are positive. Our next theorem gives asymptotic formulas for $|s(n)|$ and $b(n)$ as $n \rightarrow \infty$, similar to (\ref{j coeff asymp}).
\begin{theorem}\label{asymp for Hauptmods}
Let $s(n)$ and $b(n)$ be given as above. Then
\begin{align*}
|s(n)| &\sim \frac{1}{2 n^{3/4}} e^{2\pi \sqrt{n}}, \\
b(n) &\sim \frac{2^{1/4}}{8192} \frac{1}{n^{3/4}} e^{2\pi \sqrt{2n}},
\end{align*}
as $n \rightarrow \infty$.
\end{theorem}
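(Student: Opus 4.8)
The plan is to obtain both asymptotics from the Hardy--Ramanujan--Rademacher circle method, just as in the analysis behind (\ref{j coeff asymp}) for the $j$-function, exploiting that $\psi$ and $\phi$ are modular functions of weight $0$ for $\Gamma_0(2)$ that are holomorphic on $\mathbb{H}$ with poles supported at the two cusps $\infty$ and $0$. First I would record the relevant local data. From the transformation $\Delta(-1/z)=z^{12}\Delta(z)$ together with $\psi=\Delta(z)/\Delta(2z)$ and $\phi=\Delta(2z)/\Delta(z)$, a direct computation gives
\[
\psi(-1/(2z))=2^{12}\,\phi(z),\qquad\text{equivalently}\qquad \phi(-1/(2z))=2^{-12}\,\psi(z),
\]
with no automorphy factor intervening since the weight is $0$. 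Combined with the $q$-expansions in the statement, this shows that $\psi$ has principal part $q^{-1}$ at $\infty$ and a simple zero at the cusp $0$, while $\phi$ has a simple zero at $\infty$ and, in the width-$2$ local parameter $Q=e^{\pi i w}$ at the cusp $0$ (obtained via $w=-1/z$), principal part $2^{-12}Q^{-1}$. Equivalently, $\psi(z)=2^{12}e^{-\pi i/z}+O(1)$ and $\phi(z)=2^{-12}e^{\pi i/z}+O(1)$ as $z\to0$ in $\mathbb{H}$, with $\psi$ holomorphic there and $\phi$ instead holomorphic at $\infty$.

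Next I would write $s(n)=\int\psi(z)e^{-2\pi i n z}\,dz$ and $b(n)=\int\phi(z)e^{-2\pi i n z}\,dz$ as integrals over a horizontal segment of length $1$ in $\mathbb{H}$, deform these by a Farey (Ford-circle) dissection of $[0,1]$, and isolate the dominant arc. A reduced fraction $a/c$ is $\Gamma_0(2)$-equivalent to $\infty$ precisely when $c$ is even and to $0$ precisely when $c$ is odd; hence the pole of $\psi$ is felt near the fractions with $c$ even --- the smallest denominator being $c=2$, i.e.\ near $z=\tfrac12$ --- while the pole of $\phi$ is felt near the fractions with $c$ odd --- the smallest being $c=1$, i.e.\ near $z=0$ (equivalently $z=1$). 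On the dominant arc for $s(n)$ I would use $\left(\begin{smallmatrix}1&0\\2&1\end{smallmatrix}\right)\in\Gamma_0(2)$ to obtain $\psi(\tfrac12+w)=-e^{\pi i/(2w)}+O(1)$, and for $b(n)$ the transformation above gives $\phi(z)=2^{-12}e^{\pi i/z}+O(1)$ near $z=0$. Evaluating the resulting local integrals by the saddle point method --- the saddles lie at $w=i/(2\sqrt n)$ for $s(n)$ and at $z=i/\sqrt{2n}$ for $b(n)$, with linearized exponents $2\pi\sqrt n$ and $2\pi\sqrt{2n}$ respectively --- and using $2^{-12}(2n)^{-3/4}=\tfrac{2^{1/4}}{8192}\,n^{-3/4}$, one is led to
\[
s(n)\approx -(-1)^{n}\,\frac{1}{2\,n^{3/4}}\,e^{2\pi\sqrt n}\qquad\text{and}\qquad b(n)\approx \frac{2^{1/4}}{8192}\cdot\frac{1}{n^{3/4}}\,e^{2\pi\sqrt{2n}}.
\]
The sign $(-1)^n$, which comes from the value $e^{-\pi i n}$ of the phase at $z=\tfrac12$, is exactly why Theorem~\ref{asymp for Hauptmods} is phrased in terms of $|s(n)|$; for $b(n)$ no sign appears and one simultaneously recovers positivity.

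It then remains to show that every other arc is negligible against the main term, which is the classical technical core of the method. For $s(n)$, the fractions with $c$ even and $c\ge4$ give, after the analogous local analysis, a convergent sum over $c$ of contributions whose exponent is inversely proportional to $c$, hence all $O(e^{c'\sqrt n})$ for some fixed $c'<2\pi$; the fractions equivalent to the cusp $0$ contribute less still, since $\psi$ decays rather than blows up there on the relevant arcs. The estimate for $b(n)$ is identical with $\sqrt n$ replaced by $\sqrt{2n}$ and the roles of the two cusps interchanged. Finally, the error term in Laplace's method --- equivalently $I_1(x)=\tfrac{e^x}{\sqrt{2\pi x}}(1+O(1/x))$ applied to the exact Rademacher expansion --- upgrades the two displayed approximations to the asserted $\sim$. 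I expect the main obstacle to be the bookkeeping in the middle step: matching the width $2$ of the cusp $0$ of $\Gamma_0(2)$ against the factor $2^{12}$ coming from $\psi(-1/(2z))=2^{12}\phi(z)$, and tracking every phase, so that the constants emerge as exactly $\tfrac12$ and $\tfrac{2^{1/4}}{8192}$ rather than off by a power of $2$; the minor-arc estimates, though the most laborious part, are routine given the explicit transformation behavior of $\psi$ and $\phi$ at the rationals.
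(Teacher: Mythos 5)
Your proposal is correct in substance but takes a genuinely different route from the paper. The paper does not touch the circle method at all: it writes $\psi = q^{-1}\prod(1-q^{2n-1})^{24}$ and $\phi = q\prod(1+q^n)^{24}$, quotes known asymptotics for partitions into distinct odd parts (Hagis) and distinct parts (Bidar), and then repeatedly applies a convolution-of-asymptotics theorem of Dewar and Murty to pass to the $24$th powers; the constants $\tfrac12$ and $\tfrac{2^{1/4}}{8192}$ fall out of iterating that theorem. Your route instead reads off the principal parts at the two cusps and runs a Farey dissection plus saddle-point analysis, which is exactly the alternative the paper itself gestures at in its closing remark of Section 3 (``the circle method or other more advanced techniques likely yield satisfactory results''). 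Your main-term bookkeeping checks out: with $\psi(\tfrac12+w)=-e^{\pi i/(2w)}+O(1)$ the saddle at $w=i/(2\sqrt n)$ gives phase $2\pi\sqrt n$, second derivative $-8\pi n^{3/2}$, hence the factor $\sqrt{2\pi/(8\pi n^{3/2})}=\tfrac{1}{2}n^{-3/4}$ and the sign $(-1)^{n+1}$ consistent with the paper's observation that $(-1)^{n+1}s(n)>0$; likewise $\phi(z)=2^{-12}e^{\pi i/z}+O(1)$ gives $2^{-12}(2n)^{-3/4}=\tfrac{2^{1/4}}{8192}n^{-3/4}$. The trade-off is clear: the paper's argument is a few lines modulo two citations and yields nothing beyond the leading term, whereas your approach would in principle deliver a full Rademacher-type expansion with explicit errors (which would even improve Theorem \ref{bounds for Hauptmods}); but the step you label ``routine'' --- uniform minor-arc bounds for $\psi$ and $\phi$ at all rationals, i.e.\ the exact transformation formulas of $\eta$ under general elements of $\mathrm{SL}_2(\mathbb{Z})$ and the resulting Kloosterman-type sums --- is the bulk of the work and is only asserted, not carried out, so as written your argument is a correct plan rather than a complete proof.
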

This theorem supports the general principle that the coefficients of modular functions are large. The proof is straightforward, relying on a result of Dewar and Murty \cite{DM}.

Our last theorem is an explicit upper bound on $|s(n)|$ and $b(n)$.
\begin{theorem}\label{bounds for Hauptmods}
Let $s(n)$ and $b(n)$ be given as above. Then
\begin{align*}
|s(n)| &< 0.9 \cdot n^{11} \cdot e^{2\pi \sqrt{2n}}, \\
b(n) &< 0.08 \cdot n^{11} \cdot e^{2\pi \sqrt{2n}}
\end{align*}
for $n \geq 1$.
\end{theorem}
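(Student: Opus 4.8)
The plan is to leverage the Hauptmodul structure of $\psi$ and $\phi$ together with the circle method / Rademacher-type machinery, following the template used for the $j$-function bound (\ref{BP j bound}). Since $\psi$ is a Hauptmodul for $\Gamma_0(2)$ with a simple pole only at the cusp $\infty$ (in terms of the local parameter $q$), its negative-index Fourier coefficients are governed by the principal part at the cusps, exactly as for $j$. The first step is to write down the exact (Rademacher-style) formula for $s(n)$ as a convergent sum of Kloosterman-type sums against Bessel functions $I_{1}(\cdot)$, arising from the poles of $\psi$ at the inequivalent cusps of $\Gamma_0(2)$, namely $\infty$ and $0$. The dominant exponential $e^{2\pi\sqrt{2n}}$ in Theorem \ref{bounds for Hauptmods} should come from the cusp contributing the largest ``width $\times$ order of pole'' product; one then needs to check which cusp produces $2\pi\sqrt{2n}$ versus $2\pi\sqrt{n}$. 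Note the asymptotic in Theorem \ref{asymp for Hauptmods} has $|s(n)| \sim \tfrac{1}{2n^{3/4}} e^{2\pi\sqrt n}$, so the \emph{true} order has exponent $2\pi\sqrt n$; the bound in Theorem \ref{bounds for Hauptmods} with $e^{2\pi\sqrt{2n}}$ is a clean overestimate that also simultaneously covers $b(n)$ (whose genuine growth rate \emph{is} $e^{2\pi\sqrt{2n}}$), which is presumably why both bounds are stated with the same exponential and the slack is absorbed into the generous $n^{11}$ factor.

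For a more elementary route that avoids the full Rademacher expansion, I would instead argue as follows. Both $\psi$ and $\phi$ are weight-zero modular functions holomorphic on $\mathbb{H}$, so by contour-shifting (Cauchy's integral formula for the Fourier coefficient) we have, for any $y>0$,
\begin{align*}
s(n) &= \int_{0}^{1} \big(\psi(x+iy) + 24 - e^{-2\pi i(x+iy)}\big)\, e^{-2\pi i n(x+iy)}\, dx \;+\; \text{(explicit correction)},
\end{align*}
and similarly for $b(n)$ with $\phi$. The idea is to bound $|\psi(z)|$ and $|\phi(z)|$ pointwise on a horocycle $y = y(n)$ chosen optimally, using the product expansion $\psi(z) = \Delta(z)/\Delta(2z) = q^{-1}\prod_{n\ge 1}(1-q^n)^{24}(1-q^{2n})^{-24}$ and the transformation behavior of $\Delta$ under the Fricke involution $W_2\colon z\mapsto -1/(2z)$, which swaps $\psi$ with a constant multiple of $\phi$ evaluated at a related point. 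Near $q=0$ the product is easy to bound; near the other cusp one uses $\Delta(-1/z) = z^{12}\Delta(z)$ to control $\psi$ on the boundary of the standard fundamental domain for $\Gamma_0(2)$, translating the estimate through the cusp width $2$ at the cusp $0$, which is the source of the $\sqrt{2}$. Optimizing $y(n)$ (roughly $y \asymp 1/\sqrt{n}$) then yields a bound of the shape $|s(n)| \le C_1 n^{A} e^{2\pi\sqrt{2n}}$, and tracking constants carefully should give $C_1 < 0.9$, $A = 11$; the $n^{11}$ is far more than the true $n^{-3/4}$, so there is ample room. The constant $0.08$ for $b(n)$ should come out of the same computation applied to $\phi$, where the genuine exponential rate $e^{2\pi\sqrt{2n}}$ is matched but the polynomial factor is again wildly overestimated.

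The key steps, in order: (1) record the $q$-expansions and the Fricke-involution transformation laws relating $\psi$ and $\phi$, and identify the cusps of $\Gamma_0(2)$ with their widths; (2) set up the integral representation for $s(n)$ and $b(n)$ via Cauchy's formula on a horocycle, subtracting off the principal part of $\psi$; (3) bound $|\psi|$ and $|\phi|$ on $\mathbb{H}$ uniformly in terms of distance to the cusps, using the product formula in one region and the modular transformation to reduce the other region to the first; (4) choose the height $y = y(n)$ to balance the two competing exponential contributions and extract the $e^{2\pi\sqrt{2n}}$ term; (5) carry the numerical constants through, using the stated range $n\ge 1$ and crude bounds on the remaining Bessel-type or geometric-series tails to land under $0.9$ and $0.08$ with polynomial factor $n^{11}$.

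The main obstacle I anticipate is step (3)–(4): getting a clean, fully explicit pointwise bound on $|\psi(z)|$ and $|\phi(z)|$ that is valid on \emph{all} of a fundamental domain for $\Gamma_0(2)$ (not just near one cusp) and has the right exponential behavior as $\mathrm{Im}(z)\to 0$ through each cusp. This is exactly where the interplay of the two cusps of different ``effective depth'' produces the $\sqrt{2}$, and where one must be careful not to double-count or to lose a constant larger than the target budget. Once a serviceable pointwise bound is in hand, the optimization over $y(n)$ and the bookkeeping of constants is routine, since the generous $n^{11}$ polynomial factor leaves enormous slack — one does not need anything near the sharp $n^{-3/4}$, so crude estimates at every step suffice.
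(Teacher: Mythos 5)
Your route is genuinely different from the paper's, and in its current form it is a plan rather than a proof. The paper's argument is entirely elementary and never touches the analytic structure of $\psi$ and $\phi$ as modular functions: it writes $\psi = q^{-1}\prod_{n\ge1}(1-q^{2n-1})^{24}$ and $\phi = q\prod_{n\ge1}(1+q^n)^{24}$, bounds the coefficients of the single factor $\prod(1+q^n)$ by Bidar's explicit inequality $Q(n) < \tfrac{\pi}{2\sqrt{3n}}e^{\pi\sqrt{n/3}}$ for partitions into distinct parts, and then convolves this bound with itself repeatedly to pass through $Q_2, Q_4, Q_8, Q_{16}, Q_{24}$. Each doubling replaces $e^{A\sqrt{n}}$ by $e^{A\sqrt{2n}}$ (via $\sqrt{k}+\sqrt{n-k}\le\sqrt{2n}$) and raises the polynomial degree, which is exactly where both the exponent $2\pi\sqrt{2n}$ and the factor $n^{11}$ come from. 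The coefficients of $\psi$ are handled by the triangle inequality $|g(n)|\le Q(n)$ applied to the alternating count of partitions into distinct odd parts; so the lossy exponential $e^{2\pi\sqrt{2n}}$ for $|s(n)|$ is a byproduct of that crude majorization, not (as you suggest) a deliberate choice to unify the two statements. What the paper's method buys is that every constant is a finite, checkable computation with partition numbers; what your method would buy is a bound with the correct exponential for each function and a far better polynomial factor.

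That said, as written your proposal does not yet prove the theorem. The decisive content — an explicit, uniform pointwise bound for $|\psi(x+iy)|$ and $|\phi(x+iy)|$ over the whole horocycle, valid for all small $y$, followed by the optimization in $y$ and the numerical bookkeeping — is precisely what you defer to steps (3)–(5) and flag as the main obstacle, so nothing quantitative is established. Two specific corrections: first, $\psi$ has a pole only at the cusp $\infty$ and vanishes at $0$ (and $\phi$ the reverse), so a Rademacher expansion for $s(n)$ draws its exponential main terms only from $\Gamma_0(2)$-translates of $\infty$, not from ``poles at both cusps.'' Second, the constants $0.9$, $0.08$ and the exponent $11$ are artifacts of the paper's convolution scheme; a saddle-point argument would not produce them, so the claim that careful constant-tracking ``should give $C_1<0.9$, $A=11$'' cannot be asserted without actually carrying out the estimates — you would instead prove some bound $C n^{A'} e^{2\pi\sqrt{2n}}$ with your own constants and then verify it implies the stated one for $n\ge 1$.
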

By comparison with Theorem \ref{asymp for Hauptmods} we see these bounds may be improved, but the bounds suffice for our purposes. The proof of this theorem is elementary, using only an explicit bound on the number of partitions of an integer into distinct parts. Indeed, interpreted appropriately, the proof of Theorem \ref{bounds for Hauptmods} gives explicit upper bounds for $r$-colored partitions of $n$ into distinct parts, with $r=2,4,8,16,24$ (see \cite{EO} for definitions).

The outline of the rest of the paper is as follows. Section \ref{Background section} covers necessary background material about modular forms. In Section \ref{Hauptmod section} we prove Theorems \ref{asymp for Hauptmods} and \ref{bounds for Hauptmods}. In Section \ref{L-fn section} we prove some necessary lemmas about $L$-functions. We derive an upper bound for $\langle F_{k,m}, F_{k,m} \rangle$ in Section \ref{Petersson upp bound section}, and in Section \ref{proof of main thm section} we prove Theorem \ref{Main Theorem}.

\section{Background}\label{Background section}
Here we give some necessary background and definitions about modular forms (see e.g. section 2 of \cite{Rou2}). We let $\mathbb{H}$ denote the complex upper half-plane. For a positive integer $N \geq 1$ define
\begin{align*}
\Gamma_0(N) = \left\{ 
\begin{pmatrix}
a & b \\
c & d
\end{pmatrix}
\in \text{SL}_2(\mathbb{Z}) : c \equiv 0 \pmod{N}
\right\}.
\end{align*}
We let $S_k(N)$ denote the finite-dimensional $\mathbb{C}$-vector space of cusp forms of weight $k$ for $\Gamma_0(N)$.

If $f$ is a modular form of weight $k$ for some $\Gamma_0(N)$ and
\begin{align*}
\alpha = \begin{pmatrix}
a & b \\
c & d
\end{pmatrix}
\in \text{GL}_2(\mathbb{Q}), \  \text{det}(\alpha) > 0,
\end{align*}
we define the slash operator $f|_{\alpha}$ by
\begin{align*}
f|_{\alpha}  = (\text{det } \alpha)^{k/2} (cz+d)^{-k} f \left(\frac{az+b}{cz+d} \right).
\end{align*}
Here we follow the notation of \cite{Rou2}. Note that some authors replace the exponent $\frac{k}{2}$ by $k-1$. For a positive integer $d$ we also define the operator $V_d$ by
\begin{align*}
f(z) | V_d = f(dz).
\end{align*}
It is well-known that $V_d$ maps $S_k(N)$ to $S_k(dN)$. We further have the usual Hecke operator $T_p$, defined for $p \nmid N$ by
\begin{align*}
\sum_{n=0}^\infty a(n) q^n | T_p = \sum_{n=0}^\infty \left(a(pn) + p^{k-1} a \left(\frac{n}{p} \right) \right) q^n,
\end{align*}
where $a \left(\frac{n}{p} \right) = 0$ if $p \nmid n$. Hecke operators for $p | N$ are defined differently. The Hecke operators preserve $S_k(N)$.

If $f,g \in S_k(N)$, we define their Petersson inner product $\langle f,g \rangle$ by
\begin{align*}
\langle f,g \rangle &= \frac{3}{\pi [\text{SL}_2(\mathbb{Z}) : \Gamma_0(N)]} \int_{\mathbb{H} /  \Gamma_0(N)} f(x+iy) \overline{g(x+iy)} y^k \frac{dx dy}{y^2}.
\end{align*}
The integration takes place over a fundamental domain for $\Gamma_0(N)$, and the Petersson inner product is well-defined with respect to choice of fundamental domain (equivalently, a choice of coset representatives for $\Gamma_0(N)$ in $\text{SL}_2(\mathbb{Z})$). Additionally, if $\alpha \in \text{GL}_2(\mathbb{Q})$ with positive determinant, then $\langle f|_{\alpha}, g|_{\alpha} \rangle = \langle f,g \rangle$.

The space of oldforms of $S_k(N)$ is the space spanned by all forms
\begin{align*}
f(z) |V_d, \text{ where } f(z) \in S_k(M)
\end{align*}
and we have $M|N, M<N$ and $d$ is a divisor of $\frac{N}{M}$. We define $S_k^{\text{new}}(N)$ to be the orthogonal complement of the oldforms in $S_k(N)$ with respect to the Petersson inner product. A newform of level $N$ is a form
\begin{align*}
f(z) = \sum_{n=1}^\infty a(n) q^n \in S_k^{\text{new}}(N)
\end{align*}
that is a simultaneous eigenform of all the Hecke operators $T_p$, normalized with $a(1)=1$. It is a well-known property of newforms that if $f_1 \neq f_2$ are newforms, then
\begin{align*}
\langle f_1, f_2 \rangle = 0.
\end{align*}

Lastly we must define some modular forms. We let $\eta(z)$ be the usual Dedekind eta function
\begin{align*}
\eta(z) = q^{\frac{1}{24}} \prod_{n=1}^\infty (1-q^n).
\end{align*}
A \emph{Hauptmodul} (plural \emph{Hauptmoduln}) for a subgroup $\Gamma \subset \text{SL}_2(\mathbb{Z})$ is a modular function $f$ so that the field of all modular functions for $\Gamma$ is $\mathbb{C}(f)$. We define the Hauptmoduln $\psi,\phi$ for $\Gamma_0(2)$ by
\begin{align*}
\psi(z) &= \left(\frac{\eta(z)}{\eta(2z)} \right)^{24} = q^{-1} - 24 + \sum_{n=1}^\infty s(n) q^n, \\
\phi(z) &= \frac{1}{\psi(z)} = \sum_{n=1}^\infty b(n) q^n. \notag
\end{align*}
We note that $\psi(z)$ has a pole at infinity and vanishes at zero, while $\phi(z)$ has a pole at zero and vanishes at infinity.

Let $E_k$ denote the usual Eisenstein series of weight $k$ for $\text{SL}_2(\mathbb{Z})$. Thus
\begin{align*}
E_k(z) &= 1 - \frac{2k}{B_k} \sum_{n=1}^\infty \sigma_{k-1}(n) q^n,
\end{align*}
with $B_k$ denoting the $k$th Bernoulli number. We define modular forms $S_4$ and $F_2$ for $\Gamma_0(2)$ by
\begin{align*}
S_4(z) &= \frac{E_4(z) - E_4(2z)}{240}, \\
F_2(z) &= 2E_2(2z) - E_2(z). \notag
\end{align*}
Note that $S_4$ is modular for $\Gamma_0(2)$ of weight 4, and $F_2$ is modular for $\Gamma_0(2)$ of weight 2. The generating function for $F_{k,m}$ involves both $S_4$ and $F_2$, as well as $\psi$ (see (\ref{Gen Func for Fkm}) in Section \ref{Petersson upp bound section}).

\section{Hauptmodul Coefficients}\label{Hauptmod section}
In this section we prove asymptotics for the coefficients of $\psi$ and $\phi$, and also explicit upper bounds on the absolute values of their coefficients.

We begin by finding asymptotic formulas for the coefficients. The key ingredient is the following theorem due to Dewar and Murty (Theorem 2, \cite{DM}).
\begin{theorem}\label{DM asymp thm}
Suppose
\begin{align*}
f(z) &= \sum_{n=0}^\infty \lambda_f(n) q^n, \\
g(z) &= \sum_{n=0}^\infty \lambda_g(n) q^n,
\end{align*}
where
\begin{align*}
\lambda_f(n) &\sim c_fn^\alpha e^{A \sqrt{n}}, \\
\lambda_g(n) &\sim c_gn^\beta e^{B \sqrt{n}},
\end{align*}
with $\alpha,\beta,A,B,c_f,c_g \in \mathbb{R}$ and $A,B,c_f,c_g > 0$. Then for $fg(z) = \sum_{n=0}^\infty \lambda_{fg}(n) q^n$ we have
\begin{align*}
\lambda_{fg} \sim c_f c_g 2 \sqrt{2\pi} \frac{A^{2\alpha + 1} B^{2\beta+1}}{(A^2 + B^2)^{\frac{5}{4}+\alpha + \beta}} n^{\alpha + \beta + \frac{3}{4}} e^{\sqrt{A^2 + B^2} \sqrt{n}}.
\end{align*}
\end{theorem}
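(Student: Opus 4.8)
The plan is to treat the statement as a purely analytic fact about the Cauchy product of two sequences and to prove it by the Laplace (saddle-point) method. Write
\[
\lambda_{fg}(n) = \sum_{j=0}^{n} \lambda_f(j)\,\lambda_g(n-j).
\]
First I would extract from the hypotheses that, since $c_f, c_g > 0$, the coefficients $\lambda_f(j)$ and $\lambda_g(j)$ are positive for all sufficiently large $j$ and obey crude two-sided bounds of shape $j^\alpha e^{A\sqrt j}$; in particular $\lambda_f(j) = O\big(e^{(A+\epsilon)\sqrt j}\big)$ and $\lambda_g(j) = O\big(e^{(B+\epsilon)\sqrt j}\big)$ for every $\epsilon > 0$. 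I would then fix a small $\epsilon > 0$ and split the sum into a bulk range $\epsilon n \le j \le (1-\epsilon)n$ and two tail ranges.

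In the bulk, substitute $j = tn$ and let $\varphi(t) = A\sqrt t + B\sqrt{1-t}$, so the summand equals $(1+o(1))\, c_f c_g\, n^{\alpha+\beta}\, t^\alpha (1-t)^\beta\, e^{\sqrt n\,\varphi(t)}$ uniformly for $t$ in the bulk (the uniformity is legitimate here because both $j$ and $n-j$ tend to infinity). The function $\varphi$ is strictly concave on $(0,1)$ with a unique maximum at $t^\ast = A^2/(A^2+B^2)$, where $\varphi(t^\ast) = \sqrt{A^2+B^2}$ and $\varphi''(t^\ast) = -\tfrac14 (A^2+B^2)^{5/2}/(A^2 B^2)$. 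Writing $j = t^\ast n + u$ and Taylor-expanding, $\sqrt n\,\varphi(j/n) = \sqrt{A^2+B^2}\,\sqrt n + \tfrac12 \varphi''(t^\ast) u^2 / n^{3/2} + \cdots$, so the peak has effective width of order $n^{3/4}$ in $j$. Replacing the sum over $j$ by an integral (justified since the summand varies slowly on the scale of the peak width, e.g.\ via Euler--Maclaurin or a Riemann-sum comparison), using $t^\alpha(1-t)^\beta \to (t^\ast)^\alpha(1-t^\ast)^\beta$ across the peak, and evaluating the Gaussian integral $\int_{-\infty}^\infty e^{\varphi''(t^\ast) u^2 / (2 n^{3/2})}\, du = \sqrt{2\pi}\, n^{3/4}\, (-\varphi''(t^\ast))^{-1/2}$ gives the main term. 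Finally a short computation with $t^\ast = A^2/(A^2+B^2)$, $1-t^\ast = B^2/(A^2+B^2)$, and $(-\varphi''(t^\ast))^{-1/2} = 2AB/(A^2+B^2)^{5/4}$ collapses the constant to $2\sqrt{2\pi}\, c_f c_g\, A^{2\alpha+1} B^{2\beta+1}/(A^2+B^2)^{5/4+\alpha+\beta}$, as claimed, with power of $n$ equal to $\alpha+\beta+\tfrac34$.

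For the tails, observe that on $[0,\epsilon n]$ the function $\varphi$ is increasing (as $\epsilon < t^\ast$), so $\sqrt n\,\varphi(j/n)$ is maximized at $j = \epsilon n$ with value $\sqrt n\,\varphi(\epsilon) < \sqrt n\,\varphi(t^\ast)$ by strict concavity; combined with the crude upper bounds, this range contributes $O\big(n\, e^{c\sqrt n}\big)$ for some $c < \sqrt{A^2+B^2}$, which is negligible, and the range $[(1-\epsilon)n, n]$ is handled symmetrically. The hypothesis $c_f, c_g > 0$ is exactly what prevents cancellation among the (eventually positive) bulk terms, so the resulting estimate is two-sided and yields the stated asymptotic equivalence. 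I expect the main obstacle to be the rigorous bookkeeping of the Laplace step: making precise the uniform replacement of $\lambda_f(j)$ and $\lambda_g(n-j)$ by their asymptotic main terms across the bulk, quantifying the error in the quadratic Taylor approximation to $\varphi$ away from $t^\ast$ (and truncating to a window of width, say, $n^{3/4}\log n$), and controlling the sum-to-integral error. Since this is precisely Theorem~2 of Dewar and Murty \cite{DM}, in the paper itself one would simply invoke their result; the sketch above records the mechanism that makes it true.
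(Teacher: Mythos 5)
Your proposal is correct and matches the paper's treatment: the paper gives no proof of this statement, simply quoting it as Theorem~2 of Dewar and Murty \cite{DM}, exactly as you note one would do. Your Laplace-method sketch of the underlying mechanism (saddle at $t^\ast = A^2/(A^2+B^2)$, $\varphi''(t^\ast) = -\tfrac14(A^2+B^2)^{5/2}/(A^2B^2)$, Gaussian width $n^{3/4}$) is a sound outline and reproduces the stated constant exactly.
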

We first consider the coefficients $s(n)$ of $\psi$. To see that the $s(n)$ are all integers, note that
\begin{align*}
\psi(z) &= q^{-1} \prod_{n=1}^\infty \frac{(1-q^n)^{24}}{(1-q^{2n})^{24}} = q^{-1} \prod_{n=1}^\infty (1-q^{2n-1})^{24}.
\end{align*}
If we define $k(z)$ by
\begin{align*}
k(z) = \prod_{n=1}^\infty (1-q^{2n-1}) = 1+\sum_{n=1}^\infty g(n) q^n,
\end{align*}
then by examining $-q^{-1} k^{24}(z) = -\psi \left( z + \frac{1}{2} \right)$ in two different ways, we see that $(-1)^{n+1} s(n)$ is a positive integer.

In view of Theorem \ref{DM asymp thm} we must get an asymptotic formula for $g(n)$. It is not difficult to see that
\begin{align*}
g(n) &= (-1)^n \# \{\text{partitions of } n \text{ into distinct odd parts}\}.
\end{align*}
The function $g(n)$ is studied in \cite{Hag}, where it is shown that
\begin{align*}
|g(n)| \sim \frac{\sqrt{6}}{24^{3/4} n^{3/4}} e^{\pi \sqrt{\frac{n}{6}}}.
\end{align*}
The first part of Theorem \ref{asymp for Hauptmods} now follows by applying Theorem \ref{DM asymp thm} repeatedly (note that dividing by $q$ does not change the asymptotic).

Now we turn to the coefficients of $\phi$. We easily see from its definition that $\phi$ has positive integral coefficients, since
\begin{align*}
\phi(z) &= q\prod_{n=1}^\infty \frac{(1-q^{2n})^{24}}{(1-q^n)^{24}} = q\prod_{n=1}^\infty (1+q^n)^{24}.
\end{align*}
We recognize
\begin{align*}
\prod_{n=1}^\infty (1+q^n)
\end{align*}
as the generating function for $Q(n)$, the number of partitions of $n$ into distinct parts. An asymptotic formula for $Q(n)$ is given by
\begin{align*}
Q(n) &\sim \frac{1}{4 \cdot 3^{1/4} n^{3/4}} e^{\pi \sqrt{\frac{n}{3}}},
\end{align*}
as found, for example, in equation 4 of \cite{Bi}. Again applying Theorem \ref{DM asymp thm} repeatedly, we finish the proof of Theorem \ref{asymp for Hauptmods}. Using this same method one can establish by induction asymptotic formulas for coefficients of powers of $\psi$ and $\phi$.

It remains to prove Theorem \ref{bounds for Hauptmods}. Define coefficients $Q_k(n)$ by
\begin{align*}
\prod_{n=1}^\infty (1+q^n)^k = \sum_{n=0}^\infty Q_k(n) q^n,
\end{align*}
and note that $Q_k(0)=1$. Observe that $|g(n)|$ is less than the number of partitions of $n$ into distinct parts. Thus to get upper bounds on $|s(n)|$ and $b(n)$ it suffices to get an upper bound on $Q_{24}(n)$ and then determine the effect of dividing or multiplying by $q$.

Corollary 2 of \cite{Bi} shows that
\begin{align*}
Q(n) = Q_1(n) < \frac{\pi}{2\sqrt{3n}} \cdot e^{\pi \sqrt{\frac{n}{3}}},
\end{align*}
which implies
\begin{align*}
Q_2(n) &= \sum_{k=0}^n Q_1(k)Q_1(n-k) = 2Q_1(n) + \sum_{k=1}^{n-1} Q_1(k)Q_1(n-k) \\
&< \frac{\pi}{\sqrt{3n}}e^{\pi \sqrt{\frac{n}{3}}} + \frac{\pi^2}{12} \sum_{k=1}^{n-1} \frac{1}{\sqrt{kn-k^2}} e^{\pi(\sqrt{\frac{k}{3}} + \sqrt{\frac{n-k}{3}})} \notag \\
&< \frac{\pi}{\sqrt{3n}}e^{\pi \sqrt{\frac{n}{3}}} + \frac{\pi^2}{12} e^{\pi\sqrt{\frac{2n}{3}}}\sum_{k=1}^{n-1} \frac{1}{\sqrt{kn-k^2}}. \notag
\end{align*}
Consider the sum
\begin{align*}
\sum_{k \leq x} \frac{1}{\sqrt{kt-k^2}},
\end{align*}
with $x,t$ real numbers and $x<t$. Applying partial summation in the usual way and simplifying,
\begin{align*}
\sum_{k \leq x} \frac{1}{\sqrt{kt-k^2}} \leq 2\arctan\left( \frac{x^{1/2}}{(t-x)^{1/2}} \right) + \frac{1}{(t-1)^{1/2}} + \frac{2x^{1/2}}{t(t-x)^{1/2}}.
\end{align*}
Setting $x=n-1,t=n$, we get
\begin{align*}
\sum_{k=1}^{n-1} \frac{1}{\sqrt{kn-k^2}} \leq 2 \arctan\left( \frac{\sqrt{n-1}}{1}\right)+\frac{1}{\sqrt{n-1}} + \frac{2}{\sqrt{n}} \leq \pi + \frac{1}{\sqrt{n-1}} + \frac{2}{\sqrt{n}}.
\end{align*}
Thus
\begin{align*}
Q_2(n) &< \frac{\pi}{\sqrt{3n}}e^{\pi \sqrt{\frac{n}{3}}} + \frac{\pi^2}{12} e^{\pi\sqrt{\frac{2n}{3}}} \left(\pi + \frac{1}{\sqrt{n-1}} + \frac{2}{\sqrt{n}} \right) \\
&< 3.44 \cdot e^{\pi\sqrt{\frac{2n}{3}}} \notag
\end{align*}
if $n \geq 10$. We proceed similarly with $Q_4(n)$, obtaining
\begin{align*}
Q_4(n) &< 2 \cdot 3.44 e^{\pi \sqrt{\frac{2n}{3}}} + 3.44^2 \sum_{k=1}^{n-1}e^{\pi(\sqrt{\frac{2n}{3}}+\sqrt{\frac{2(n-k)}{3}})} \\
&< 12.08 \cdot n \cdot e^{2\pi \sqrt{\frac{n}{3}}} \notag.
\end{align*}
Continuing in this manner we get
\begin{align*}
Q_8(n) &< 24.33 \cdot n^3 \cdot e^{2\pi \sqrt{\frac{2n}{3}}}, \\
Q_{16}(n) &< 4.23 \cdot n^7 \cdot e^{4\pi \sqrt{\frac{n}{3}}}, \\
Q_{24}(n) &< 0.08 \cdot n^{11} \cdot e^{2\pi \sqrt{2n}}.
\end{align*}
Adjusting for multiplication or division by $q$ and doing a manual check for $1 \leq n \leq 10$, we finish the proof of Theorem \ref{bounds for Hauptmods}.

It is natural to ask whether our elementary methods give similar results for Hauptmoduln of higher levels (particularly levels 3, 5, and 7). Suitable modifications of our arguments will give explicit, but weak, upper bounds. Tighter bounds require bounds on restricted integer partitions. On the other hand, obtaining asymptotic formulas via our method seems difficult. For example, consider using our method to obtain an asymptotic formula for the coefficients of the level 3 analogue of $\psi$. We find that we would need an asymptotic formula for the absolute value of
\begin{align*}
\#&\left\{ \text{partitions of $n$ into even number of distinct parts congruent to 1 mod 3} \right\} \\
&-\#\left\{ \text{partitions of $n$ into odd number of distinct parts congruent to 1 mod 3} \right\}. \notag
\end{align*}
We would also need a similar asymptotic formula for parts that are 2 mod 3. As the level increases the restricted partitions become more complex, hence an extension of our method to higher levels appears nontrivial. The circle method or other more advanced techniques likely yield satisfactory results (e.g. compare (\ref{j coeff asymp}) and the main theorem of \cite{Rad}).

\section{$L$-function Calculations}\label{L-fn section}
The goal of this section is to derive a lower bound on the $L$-function special value $L(\text{Sym}^2 g,1)$ (see below for definitions), where $g \in S_k(2)$ is a newform. We proceed in slightly more generality, treating $g \in S_k^{\text{new}}(p)$, for $p \in \{2,3,5,7\}$. We are interested in the special value $L(\text{Sym}^2 g,1)$ because of the well-known identity
\begin{align*}
\langle g,g \rangle &= \frac{6}{\pi^2} \cdot \frac{1}{1 + \frac{1}{p}} \cdot \frac{\Gamma(k)}{(4\pi)^k} L(\text{Sym}^2 g,1).
\end{align*}

If $g \in S_k(p)$ has coefficients $\{a(n)\}_{n=1}^\infty$, then we define the normalized $L$-function of $g$ to be
\begin{align*}
L(g,s) = \prod_{q \text{ prime}} (1-\alpha_q q^{-s})^{-1}(1-\beta_q q^{-s})^{-1},
\end{align*}
where $\alpha_q + \beta_q = \frac{a(q)}{q^{\frac{k-1}{2}}}$ and $\alpha_q \beta_q = 1$ for $q \neq p$, and we allow $\alpha_p$ or $\beta_p$ to be zero. The symmetric square $L$-function $L(\text{Sym}^2 g,s)$ associated to $g$ is then given by
\begin{align*}
L(\text{Sym}^2 g,s) = \prod_{q \text{ prime}} (1-\alpha_q^2 q^{-s})^{-1}(1-q^{-s})^{-1}(1-\beta_q^2 q^{-s})^{-1};
\end{align*}
see Section 3 of \cite{CM} for the computation of these local factors. The symmetric square $L$-function is known by the work of Gelbart and Jacquet \cite{GeJa} to be the $L$-function of a cuspidal automorphic representation on $\text{GL}(3)$, hence is entire and has a functional equation of the usual form: if we set
\begin{align*}
\Lambda(\text{Sym}^2 g,s) &= p^s \pi^{-\frac{3s}{2}} \Gamma \left( \frac{s+1}{2} \right)\Gamma \left( \frac{s+k-1}{2} \right)\Gamma \left( \frac{s+k}{2} \right)L(\text{Sym}^2 g,s),
\end{align*}
then we have the functional equation
\begin{align*}
\Lambda(\text{Sym}^2 g,s) = \Lambda(\text{Sym}^2 g,1-s).
\end{align*}
Our first step toward obtaining a lower bound on $L(\text{Sym}^2 g,1)$ is to show that $L(\text{Sym}^2 g,s)$ has no Siegel zeros.
\begin{lemma}\label{ZeroFreeRegionLemma}
Let $g \in S_k^{\emph{new}}(p)$, with $p \in \{2,3,5,7\}$. Then
\begin{align*}
L(\emph{Sym}^2 g,s) \not = 0
\end{align*}
for real $s$ with $s > 1-\frac{5-2\sqrt{6}}{10 \log k}$.
\end{lemma}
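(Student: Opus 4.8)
The plan is to invoke a standard zero-free region result for $\mathrm{GL}(3)$ $L$-functions — or, more efficiently, to exploit the fact that $L(\mathrm{Sym}^2 g, s)$ is entire and real-valued on the real line, so that a classical positivity argument applies. First I would record that, by the theorem of Gelbart and Jacquet, $L(\mathrm{Sym}^2 g, s)$ is the $L$-function of a self-dual cuspidal automorphic representation on $\mathrm{GL}(3)$ with trivial central character, hence its Dirichlet coefficients are real and (being a degree-$3$ Euler product with the functional equation displayed above) $L(\mathrm{Sym}^2 g, s)$ is positive and increasing enough on a right half-line to pin down the location of any real zero. The key input is the analytic conductor: from $\Lambda(\mathrm{Sym}^2 g, s) = p^s \pi^{-3s/2}\Gamma(\tfrac{s+1}{2})\Gamma(\tfrac{s+k-1}{2})\Gamma(\tfrac{s+k}{2})L(\mathrm{Sym}^2 g,s)$ with $p \leq 7$, the conductor is $\asymp k^2$, so $\log$ of the conductor is $\asymp \log k$; this is the source of the $\log k$ in the denominator of the claimed bound.

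The main steps, in order, would be: (1) quote or set up the explicit Landau-type bound on a possible real (Siegel) zero $\beta$ of an entire $L$-function in terms of its completed conductor, using that $\Lambda(\mathrm{Sym}^2 g,s)$ is entire of order $1$ with no pole (no pole because $g$ is a cusp form, so $L(\mathrm{Sym}^2 g,s)$ does not factor through $\zeta(s)$); (2) apply the Hadamard product / logarithmic derivative of $\Lambda$ at a well-chosen real point $s_0 > 1$ to bound $\beta$ away from $1$, extracting the numerical constant; (3) carefully estimate the gamma-factor contribution $\tfrac{d}{ds}\log\left(\Gamma(\tfrac{s+1}{2})\Gamma(\tfrac{s+k-1}{2})\Gamma(\tfrac{s+k}{2})\right)$ near $s=1$, which is where the dependence on $k$ enters and where the precise constant $\tfrac{5-2\sqrt 6}{10}$ must be teased out; (4) bound the Dirichlet-series contribution $-\tfrac{L'}{L}(\mathrm{Sym}^2 g, s_0)$ trivially using nonnegativity of the relevant coefficients (the coefficients of $-\tfrac{L'}{L}$ of a $\mathrm{GL}(3)$ Rankin–Selberg-type object are nonnegative) together with $\alpha_q\beta_q = 1$ and $|\alpha_q| = |\beta_q| = 1$ for $q \neq p$ from Deligne's bound; (5) combine and solve for the zero-free region, checking that the resulting threshold is exactly $1 - \tfrac{5 - 2\sqrt 6}{10\log k}$.

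I expect step (3)–(5) — squeezing the explicit constant $\tfrac{5-2\sqrt 6}{10}$ out of the gamma factors and the choice of auxiliary point $s_0$ — to be the main obstacle, since the bound is stated with a specific algebraic number and getting it (rather than merely $\gg 1/\log k$) requires optimizing the free parameters rather than using crude estimates. A secondary technical point is to confirm that $L(\mathrm{Sym}^2 g, 1) \neq 0$ is covered: since the claimed region is $s > 1 - \tfrac{5-2\sqrt6}{10\log k}$ and this is an open condition not including arbitrarily small neighborhoods below $1$ but does include $s = 1$ itself, one must separately note that $s=1$ is not a zero — which follows from $\mathrm{Sym}^2 g$ being cuspidal on $\mathrm{GL}(3)$ (no pole/zero forced at the edge) together with the nonvanishing of automorphic $L$-functions on the line $\mathrm{Re}(s) = 1$. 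With these in hand the lemma follows.
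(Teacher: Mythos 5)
There is a genuine gap, and it is the central one: your plan is to apply the Hadamard product and a positivity argument directly to $\Lambda(\mathrm{Sym}^2 g,s)$ itself, bounding $-\tfrac{L'}{L}(\mathrm{Sym}^2 g,s_0)$ by ``nonnegativity of the relevant coefficients.'' This cannot work. First, the coefficients of $-\tfrac{L'}{L}(\mathrm{Sym}^2 g,s)$ are \emph{not} nonnegative: at a prime power $q^m$ with $q\neq p$ they equal $\log(q)\,(1+\alpha_q^{2m}+\beta_q^{2m})=\log(q)\,(1+2\cos(2m\theta_q))$, which can be as small as $-\log q$. Second, and more fundamentally, even if you pass to the genuinely nonnegative object $L(g\otimes g,s)=\zeta(s)L(\mathrm{Sym}^2 g,s)$, the simple pole of $\zeta$ at $s=1$ has the same order as a putative simple real zero $\beta$ of $L(\mathrm{Sym}^2 g,s)$, so the resulting inequality degenerates and yields no zero-free region on the real axis near $s=1$. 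This is precisely the Siegel-zero obstruction; a ``Landau-type bound'' of the kind you describe rules out complex zeros near $s=1$ but is powerless against a single real zero of a self-dual $L$-function.

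What the paper actually does (following Goldfeld--Hoffstein--Lieman, via Rouse and Kim--Rouse) is introduce the auxiliary function
\begin{align*}
L(s)=\zeta(s)^2\,L(\mathrm{Sym}^2 g,s)^3\,L(\mathrm{Sym}^4 g,s),
\end{align*}
which is $L(\sigma\times\tilde\sigma,s)$ for the isobaric representation $\sigma=1\boxplus\mathrm{Sym}^2 g$ on $\mathrm{GL}(4)$ and hence genuinely has nonnegative Dirichlet coefficients, a \emph{double} pole at $s=1$, and a zero of order at least $3$ at any real zero $\beta$ of $L(\mathrm{Sym}^2 g,s)$. The mismatch $3>2$ is what makes the argument close: the logarithmic derivative of the Hadamard product gives
\begin{align*}
\frac{3}{1+\alpha-\beta}\leq 2+\frac{2}{\alpha}+\frac{G'(1+\alpha)}{G(1+\alpha)}\leq \frac{2}{\alpha}+10\log k,
\end{align*}
and optimizing $\alpha=\frac{\sqrt6-2}{10\log k}$ produces exactly the constant $\frac{5-2\sqrt6}{10}$. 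This step requires Kim's theorem that $\mathrm{Sym}^4 g$ is automorphic on $\mathrm{GL}(5)$ (so that $L(\mathrm{Sym}^4 g,s)$ is entire with the right functional equation); your proposal never mentions $\mathrm{Sym}^4$ or any auxiliary product, so the essential idea is missing and steps (1)--(5) as written would not produce any bound of the claimed shape.
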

\begin{proof}
Here we follow \cite{Rou1} and \cite{KR}, which are based on an argument of Goldfeld, Hoffstein, and Lieman \cite{HL}. 

Consider the function
\begin{align*}
L(s)=\zeta(s)^2L(\text{Sym}^2g,s)^3 L(\text{Sym}^4g,s),
\end{align*}
where $L(\text{Sym}^4g,s)$ is given by
\begin{align*}
L(\text{Sym}^4g,s) &= \prod_{q \text{ prime}} (1-\alpha_q^4 q^{-s})^{-1}(1-\alpha_q^2 q^{-s})^{-1}(1-q^{-s})^{-1} (1-\beta_q^2 q^{-s})^{-1} (1-\beta_q^4 q^{-s})^{-1}
\end{align*}
and $\alpha_q,\beta_q$ as above (as with the symmetric square $L$-function, see \cite{CM} for further details). Kim \cite{Kim} proved that $L(\text{Sym}^4 g,s)$ is associated to an automorphic representation on $\text{GL}(5)$, so $L(\text{Sym}^4 g,s)$ has an analytic continuation and functional equation of the usual type.

From the above remarks it follows that $L(s)$ has a functional equation and is analytic except for a pole at $s=1$. Define $\Lambda(s)=s^2(1-s)^2G(s)L(s)$, where
\begin{align*}
G(s)=&p^{5s}\pi^{-8s}\Gamma\left(\frac{s}{2}\right)^3 \Gamma\left(\frac{s+1}{2}\right)^3 \Gamma\left(\frac{s+k-1}{2}\right)^4 \\
&\times \Gamma\left(\frac{s+k}{2}\right)^4 \Gamma\left(\frac{s+2k-2}{2}\right) \Gamma\left(\frac{s+2k-1}{2}\right). \notag
\end{align*}
Then $\Lambda(s)=\Lambda(1-s)$. For the duration of the proof we take $s$ real and greater than 1. As $\Lambda(s)$ is an entire function of order 1, it admits a Hadamard product factorization
\begin{align*}
\Lambda(s)=e^{A+Bs}\prod_\rho \left(1-\frac{s}{\rho} \right)e^{s/\rho},
\end{align*}
where $A,B$ are constants and the product is over all zeros $\rho$ of $\Lambda(s)$. Taking the logarithmic derivative, we have
\begin{align*}
\sum_{\rho} \left(\frac{1}{s-\rho} + \frac{1}{\rho} \right)=\frac{2}{s}+\frac{2}{s-1}+\frac{G'(s)}{G(s)}+\frac{L'(s)}{L(s)}-B.
\end{align*}
Since $s>1$, we have $L(s)>0$ and $L'(s)<0$, and hence $\frac{L'(s)}{L(s)} < 0$. Now we take the real part of both sides and use the fact that $\text{Re}(B)=-\sum_{\rho} \text{Re}(\frac{1}{\rho})$ (Theorem 5.6 in \cite{IK}). Therefore
\begin{align*}
\sum_{\rho} \text{Re} \left(\frac{1}{s-\rho} \right) \leq \frac{2}{s}+\frac{2}{1-s} + \frac{G'(s)}{G(s)}.
\end{align*}
Define $\psi(s)=\frac{\Gamma'(s)}{\Gamma(s)}$ (the notational conflict with the Hauptmodul $\psi$ is unfortunate, but both notations are standard). Then
\begin{align*}
\frac{G'(s)}{G(s)}&=5\log(p)-8\log(\pi)+\frac{3}{2}\psi\left(\frac{s}{2}\right)+\frac{3}{2}\psi\left(\frac{s+1}{2}\right)+2\psi\left(\frac{s+k-1}{2}\right)  \\
&+ 2\psi\left(\frac{s+k-1}{2}\right) + \frac{1}{2}\psi\left(\frac{s+2k-2}{2}\right)+\frac{1}{2}\psi\left(\frac{s+2k-1}{2}\right). \nonumber
\end{align*}
Now set $s=1+\alpha$ with $0<\alpha \leq \frac{1}{2}$ to be chosen shortly. With $p \in \{2,3,5,7\}$ and the restriction on $s$, we have that $\frac{G'(s)}{G(s)} \leq 10\log(k)-2$, since $\Psi(s) \leq \log s$ for $s \geq 1$. Now assume $\beta$ is a real zero of $L(\text{Sym}^2g,s)$. Then
\begin{align*}
\frac{3}{1+\alpha-\beta} \leq 2 + \frac{2}{\alpha} + \frac{G'(s)}{G(s)} \leq \frac{2}{\alpha}+10\log(k).
\end{align*}
Choosing the optimum value of $\alpha=\frac{\sqrt{6}-2}{10\log(k)}$, which is always less than $1/2$, completes the proof.
\end{proof}
We use the above lemma to get a lower bound on $L(\text{Sym}^2 g,1)$, following Rouse \cite{Rou1} and an argument of Hoffstein \cite{Hof}.
\begin{lemma}\label{Newform Lower Bound}
Let $k$ be an even integer with $k \geq 8$, and let $p \in \{2,3,5,7\}$. If $g$ is a normalized newform in $S_k^{\text{new}}(p)$ then
\begin{align*}
L(\emph{Sym}^2g,1) > \frac{1}{86\log(k)}.
\end{align*}
\end{lemma}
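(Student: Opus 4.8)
The plan is to carry out the effective form of the Hoffstein--Lockhart / Goldfeld--Hoffstein--Lieman lower bound, following Rouse \cite{Rou1} and Hoffstein \cite{Hof} as the text indicates. First I would work with the Rankin--Selberg convolution $\mathcal L(s) := \zeta(s)L(\mathrm{Sym}^2 g, s)$, i.e.\ $L(g\otimes g,s)$. Three features drive the argument. (i) $\mathcal L(s)$ has nonnegative Dirichlet coefficients, with the coefficient of $n = 1$ equal to $1$; this is transparent from the factorization $\mathcal L(s) = \zeta(2s)\sum_{n \ge 1}|\lambda_g(n)|^2 n^{-s}$ (up to the Euler factor at $p$), where $\lambda_g(n) = a(n)n^{-(k-1)/2}$. (ii) $\mathcal L(s)$ is holomorphic except for a simple pole at $s = 1$, and since $\zeta$ has residue $1$ there the residue of $\mathcal L$ at $s=1$ is exactly $L(\mathrm{Sym}^2 g, 1)$; equivalently, by the displayed identity for $\langle g,g\rangle$, a lower bound for $L(\mathrm{Sym}^2 g,1)$ is a lower bound for $\langle g,g\rangle(4\pi)^k/\Gamma(k)$. (iii) $\mathcal L$ satisfies a functional equation $s \mapsto 1-s$ with analytic conductor $O(p^2 k^2)$ and explicit archimedean factors, assembled from those of $\zeta$ and of $L(\mathrm{Sym}^2 g, s)$ recorded above.

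Next I would observe that $\mathcal L$ has no Siegel zero: $\zeta(\sigma) < 0$ on $(0,1)$, while Lemma \ref{ZeroFreeRegionLemma} gives $L(\mathrm{Sym}^2 g, \sigma) > 0$ for $\sigma > 1 - \tfrac{5-2\sqrt6}{10\log k}$ (it is positive at $s=2$ and nonzero on that interval), so $\mathcal L(\sigma) \neq 0$ throughout $\bigl(1 - \tfrac{5-2\sqrt6}{10\log k},\,1\bigr)$. The heart of the proof is then an effective Tauberian argument. I would write a smoothed, weighted sum of the nonnegative Dirichlet coefficients of $\mathcal L$ --- or of a related series with nonnegative coefficients admitting a cleaner elementary lower bound for its partial sums, such as $\zeta(s)^2 L(\mathrm{Sym}^2 g, s)$ whose partial sums are $\gg X$ --- as a Mellin--Barnes integral $\tfrac{1}{2\pi i}\int \mathcal L(s)\,K(s)\,X^s\,ds$ with a kernel $K$ decaying rapidly on vertical lines, and move the contour from $\Re s > 1$ to the line $\Re s = 1 - \tfrac{5-2\sqrt6}{10\log k}$, just inside the zero-free region. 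The only pole crossed is the one at $s=1$, contributing a main term proportional to $L(\mathrm{Sym}^2 g, 1)$. The shifted integral is bounded using the functional equation together with the Phragm\'en--Lindel\"of convexity bound; the decisive point is that on this line the conductor-dependent factor in the convexity estimate is $(p^2 k^2)^{O(1/\log k)} = O(1)$, so every implied constant stays absolute and the sole factor $\log k$ enters through the width $\asymp (\log k)^{-1}$ of the zero-free region. Comparing the main term with the elementary lower bound for the weighted sum, and optimizing over $X$ and the test function, yields $L(\mathrm{Sym}^2 g, 1) \gg 1/\log k$ with an explicit constant.

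The step I expect to be the main obstacle is exactly the explicit bookkeeping needed to reach the constant $86$: one must choose the auxiliary series, the weight, and the contour so that the elementary lower bound for the weighted sum, the residue computation isolating $L(\mathrm{Sym}^2 g,1)$, and a fully explicit convexity/Archimedean estimate on $\Re s = 1 - \tfrac{5-2\sqrt6}{10\log k}$ --- including explicit convexity constants and explicit Stirling bounds for the $k$-dependent $\Gamma$-factors --- all cooperate, followed by a direct check for the few smallest values of $k \ge 8$. The conceptual steps are standard; keeping the numerical constant honest through the Archimedean analysis is the delicate part.
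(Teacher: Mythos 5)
Your overall strategy---Rankin--Selberg positivity, a weighted Mellin/Perron integral, a contour shift isolating the pole, with Lemma \ref{ZeroFreeRegionLemma} as input---is the right family of argument and is what the paper does. But two of your mechanisms are off in ways that would prevent the proof from closing. First, the source of the $1/\log k$. In your normalization the main term is $\asymp L(\mathrm{Sym}^2 g,1)\cdot X$ while your elementary lower bound for the weighted sum is $\gg X$; if the shifted integral really were negligible you would conclude $L(\mathrm{Sym}^2 g,1)\gg 1$ with an absolute constant, which this method cannot deliver---so the error term cannot in fact be controlled in your setup, and "the width of the zero-free region" does not enter through the contour placement. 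The paper instead evaluates the Dirichlet series at $s+\beta$ with $\beta=1-\frac{5-2\sqrt6}{10\log k}$ the edge of the zero-free region, considering
\begin{align*}
\frac{1}{2\pi i}\int_{(2)} \frac{L(g\otimes g,s+\beta)\,x^s}{s\prod_{r=2}^{10}(s+r)}\,ds .
\end{align*}
The pole of $L(g\otimes g,\cdot)$ then sits at $s=1-\beta$ and contributes $\frac{L(\mathrm{Sym}^2 g,1)x^{1-\beta}}{(1-\beta)\prod_{r=2}^{10}(1-\beta+r)}$: the factor $\frac{1}{1-\beta}\asymp\log k$ amplifies the main term, while the elementary lower bound (from $a(n^2)\geq1$) is an absolute constant. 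That is where $\log k$ enters. Correspondingly, Lemma \ref{ZeroFreeRegionLemma} is used not to say the function is nonzero on an interval but to fix the signs of $L(g\otimes g,\beta)$ and $L(g\otimes g,-2+\beta)$ (since $\zeta<0$ there and $L(\mathrm{Sym}^2 g,\cdot)\geq0$ to the right of $\beta$), so that the residues picked up at the kernel poles $s=0$ and $s=-2$ have favorable sign and can be discarded. Your sketch has no mechanism for these kernel-pole residues, and your proposed variant with $\zeta(s)^2L(\mathrm{Sym}^2 g,s)$ makes things worse: the double pole brings in $L'(\mathrm{Sym}^2 g,1)$, which you do not control.

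Second, the shifted integral. Stopping at $\Re s=1-\frac{5-2\sqrt6}{10\log k}$ and invoking an explicit Phragm\'en--Lindel\"of bound there is not a routine computation: the interpolation endpoint at $\Re s=1+\epsilon$ costs $\zeta(1+\epsilon)^4$, which degenerates as $\epsilon\to0$, and no clean absolute constant comes out of convexity that close to the edge of the critical strip. The paper sidesteps this entirely by shifting to $\Re(s+\beta)=-3/2$, where the functional equation reflects the $L$-value to $\Re s=5/2$---inside the region of absolute convergence, bounded by $\zeta(5/2)^4$---at the cost of explicit Gamma-factor ratios of size $k^{O(1)}$, which are then beaten by $x^{-3/2-\beta}$ with $x=k^{16/5}$. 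If you pursue your version, expect the convexity step, not the bookkeeping, to be the point of failure.
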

\begin{proof}
We have the Rankin-Selberg convolution $L$-function
\begin{align*}
L(g \otimes g,s)=\zeta(s)L(\text{Sym}^2g,s)=\sum_{n=1}^\infty \frac{a(n)}{n^s}.
\end{align*}
Obviously $L(g \otimes g,s)$ has a functional equation since both $\zeta(s)$ and $L(\text{Sym}^2f,s)$ do. By checking Euler factors one may show that $a(n)\geq 0$ and $a(n^2) \geq 1$ for all $n$. Let $\beta=1-\frac{5-2\sqrt{6}}{10 \log k}$, and note that $\frac{9}{10}<\beta<1$. Set $x=k^A$, where $A$ is a parameter to be chosen at the end of the proof (in the end we will set $A = \frac{16}{5}$). Consider the integral
\begin{align*}
I=\frac{1}{2\pi i} \int_{2-i\infty}^{2+i \infty} \frac{L(g \otimes g,s + \beta)x^s}{s \prod_{r=2}^{10}(s+r)}ds.
\end{align*}
We use the fact that
\begin{align*}
\frac{1}{2\pi i} \int_{2-i\infty}^{2+i \infty} \frac{x^s}{s \prod_{r=2}^{10}(s+r)}ds = 
\begin{cases}
\frac{(x+9)(x-1)^9}{10!x^{10}}, \text{ if } x>1 \\
0, \text{ if } x<1,
\end{cases}
\end{align*}
which implies
\begin{align*}
I=\sum_{n \leq x} \frac{a(n)(\frac{x}{n}+9)(\frac{x}{n}-1)^9}{10! n^{\beta}(\frac{x}{n})^{10}}.
\end{align*}
We take only those terms with $\frac{x}{n}\geq 48$, giving
\begin{align*}
I \geq \frac{1}{10!}\frac{(48+9)(48-1)^9}{48^{10}}\sum_{n \leq \sqrt{\frac{x}{48}}} \frac{1}{n^2} \geq \frac{1.39873}{10!}.
\end{align*}
Let $\alpha=-\frac{3}{2}-\beta$. We shift the contour in $I$ to $\alpha$, and thereby pick up poles at $s=1-\beta$, $s=0$, and $s=-2$. By the Residue Theorem, we have
\begin{align*}
I=&\frac{1}{2\pi i}\int_{\alpha-i\infty}^{\alpha+i \infty} \frac{L(g \otimes g,s + \beta)x^s}{s \prod_{r=2}^{10}(s+r)}ds + \frac{L(\text{Sym}^2g,1)x^{1-\beta}}{(1-\beta)\prod_{r=2}^{10}(1-\beta+r)} \\
&+\frac{L(g \otimes g,\beta)}{10!}-\frac{L(g \otimes g,-2+\beta)x^{-2}}{2 \cdot 8!}. \notag
\end{align*}
Lemma \ref{ZeroFreeRegionLemma} shows that $L(\text{Sym}^2g,s)$ has no real zeros to the right of $\beta$, so $L(\text{Sym}^2g,\beta)\geq 0$. As $\zeta(\beta)<0$, we see that $L(g \otimes g,\beta)\leq 0$. Similarly, we have that $L(\text{Sym}^2g,-2+\beta)<0$ and $\zeta(-2+\beta)<0$, so $L(g \otimes g,-2+\beta) >0$. Thus, we have
\begin{align}\label{CentralLowerValueInequality}
I-I_2 \leq \frac{L(\text{Sym}^2g,1)x^{1-\beta}}{(1-\beta)\prod_{r=2}^{10}(1-\beta+r)},
\end{align}
where we have defined
\begin{align*}
I_2=\frac{1}{2\pi i}\int_{\alpha-i\infty}^{\alpha+i \infty} \frac{L(g \otimes g,s + \beta)x^s}{s \prod_{r=2}^{10}(s+r)}ds.
\end{align*}
We require an upper bound $|I_2|$. Using the functional equations for $L(g \otimes g,s)$ and $\Gamma(s)$ we have
\begin{align*}
\left|L\left(g \otimes g,-\frac{3}{2}+it\right)\right| &= p^4\pi^{-8} \left| \frac{1}{4}+ \frac{it}{2} \right|^2 \left| \frac{3}{4}+ \frac{it}{2} \right|^2 \left| \frac{k}{2} - \frac{5}{4} - \frac{it}{2} \right|
\left| \frac{k}{2} - \frac{3}{4} - \frac{it}{2} \right| \\
&\times \left| \frac{k}{2} - \frac{1}{4} - \frac{it}{2} \right| \left| \frac{k}{2} + \frac{1}{4} - \frac{it}{2} \right|\left|L\left(f \otimes f,\frac{5}{2}-it\right)\right|. \nonumber
\end{align*}
Recall that $|x^s|=k^{-A(\frac{3}{2}+\beta)}$. We have the bounds
\begin{align*}
\left|L(g \otimes g,\frac{5}{2}-it)\right| \leq \zeta \left(\frac{5}{2} \right)^4
\end{align*}
and
\begin{align*}
\frac{1}{|-3/2-\beta+it|\prod_{r=2}^{10}|r-3/2-\beta+it|} \leq \frac{1}{|12/5+it||2/5+it|\prod_{r=3}^{10}|r-5/2+it|}.
\end{align*}
Together this gives
\begin{align*}
|I_2| &\leq \frac{\zeta(5/2)^4 p^4 k^{4-A(3/2+\beta)}}{2^9 \pi^9} \cdot \int_{-\infty}^\infty \frac{|1/2+it|^2|3/2+it|^2|1+it|^3|256/225+it|}{|12/5+it||2/5+it|\prod_{r=3}^{10}|r-5/2+it|}dt \\
&\leq \frac{.18047}{10!}p^4k^{4-A(3/2+\beta)} \leq \frac{.18047}{10!}k^{8-A(3/2+\beta)}. \nonumber
\end{align*}
By (\ref{CentralLowerValueInequality}), this gives
\begin{align*}
L(\text{Sym}^2g,1)\geq (1-\beta)(1.39873k^{A(\beta-1)}-.18047k^{8-5A/2}).
\end{align*}
Choosing $A=\frac{16}{5}$ completes the proof.
\end{proof}

\section{Petersson Norm Upper Bounds}\label{Petersson upp bound section}
Recall the notation from the introduction. We write $k = 4\ell + k'$ where $k' \in \{0,2\}$. The dimension of $S_k(2)$ is $\ell - 1$. There is a basis for $S_k(2)$ indexed by $m$ given by
\begin{align*}
F_{k,m}(z) &= q^m + \sum_{n = \ell}^\infty A_k(m,n) q^n, \ 1 \leq m \leq \ell - 1.
\end{align*}
In this section we obtain an upper bound on $\langle F_{k,m}, F_{k,m} \rangle$. The $F_{k,m}$ have the following generating function (Section 6 of \cite{GJ}):
\begin{align*}
\sum_{m=-\infty}^{\ell - 1} F_{k,m}(z)e^{2\pi i m \tau}=\frac{(S_4^{\ell} \psi F_{k'})(z)}{(S_4^{\ell} \psi F_{k'})(\tau)} \frac{\psi(\tau)F_2(\tau)}{\psi(\tau)-\psi(z)}.
\end{align*}
Recall that $S_4$ and $F_2$ were defined in Section \ref{Background section}. We set $F_0(z) = 1$. Integrating the generating function gives an integral representation of $F_{k,m}(z)$, namely
\begin{align}\label{Gen Func for Fkm}
F_{k,m}(z) = \int_{-1/2}^{1/2} \frac{(S_4^\ell \psi F_{k'})(z)}{(S_4^\ell \psi F_{k'})(\tau)} \frac{\psi(\tau)F_2(\tau)}{\psi(\tau)-\psi(z)}e^{2\pi i m \tau} du,
\end{align}
where $\tau = u+iv$ and $v$ is a fixed constant to be chosen later. Here we are not free to take $v$ arbitrarily. We require $v$ to be positive and large enough such that it corresponds (under a change of variables) to a circle around $q=0$ that avoids poles in the integrand of (\ref{Gen Func for Fkm}); see the beginning of Section 3 in \cite{JR} for similar discussion.

To compute the inner product $\langle F_{k,m}, F_{k,m} \rangle$, we first need coset representatives of $\Gamma_0(2)$ in $\text{SL}_2(\mathbb{Z})$. The index of $\Gamma_0(2)$ in $\text{SL}_2(\mathbb{Z})$ is 3, and we choose the coset representatives
\begin{align*}
\alpha_1 &= 
\begin{pmatrix}
1 & 0 \\
0 & 1
\end{pmatrix}, \ \
\alpha_2 = 
\begin{pmatrix}
0 & -1 \\
1 & 0
\end{pmatrix}, \ \
\alpha_3 = 
\begin{pmatrix}
1 & 0 \\
1 & 1
\end{pmatrix}.
\end{align*}
Recall that the Petersson inner product is well-defined with respect to choice of coset representatives. If $\mathcal{F}$ denotes the usual fundamental domain for $\text{SL}_2(\mathbb{Z})$, then
\begin{align*}
\langle F_{k,m}, F_{k,m} \rangle &= \frac{1}{\pi} \int_{\mathbb{H}/\Gamma_0(2)} |F_{k,m}(x+iy)|^2 y^k \frac{dx dy}{y^2} \\
&= \frac{1}{\pi} \sum_{i=1}^3 \int_{\mathcal{F}} \left|F_{k,m}|_{\alpha_i^{-1}}(x+iy) \right|^2 y^{k-2} dx dy. \notag
\end{align*}
(The presence of $y^{k-2}$ in each integral is to ensure certain invariance properties are satisfied.) To get an upper bound on $\langle F_{k,m}, F_{k,m} \rangle$ we study $F_{k,m}|_{\alpha_i^{-1}}$. In particular, the Fourier expansion of $F_{k,m}$ changes when $F_{k,m}$ is acted on by these matrices, and we require upper bounds on the absolute values of these coefficients. The generating function representation of $F_{k,m}$ is key to this step. From the definition of the slash operator and the theory of Fourier expansions of modular forms, we have
\begin{align*}
F_{k,m}|_{\alpha_1^{-1}}(z) &= F_{k,m}(z) = q^m + \sum_{n=\ell}^\infty A_k^{(1)}(m,n)q^n, \\
F_{k,m}|_{\alpha_2^{-1}}(z) &= z^{-k} F_{k,m} \left(-\frac{1}{z} \right) = z^{-k} \sum_{n=1}^\infty A_k^{(2)}(m,n)q^{n/2}, \notag \\
F_{k,m}|_{\alpha_3^{-1}}(z) &= (-z+1)^{-k} F_{k,m} \left(\frac{z}{-z+1} \right) = (-z+1)^{-k} \sum_{n=1}^\infty A_k^{(3)}(m,n)q^{n/2}. \notag
\end{align*}
We write $A_k^{(i)}(m,n)$ to denote the dependence on $\alpha_i$. Note that $A_k^{(1)}(m,n) = A_k(m,n)$.

To get upper bounds on $|A_k^{(i)}(m,n)|$ we determine how replacing $z$ by $-\frac{1}{z}$ or $\frac{z}{-z+1}$ changes the integral representation of $F_{k,m}$. First consider $z \rightarrow - \frac{1}{z}$. Recall that $\eta(z)$ satisfies
\begin{align*}
\eta\left( - \frac{1}{z} \right) = \sqrt{-iz}\eta(z).
\end{align*}
Since $\psi$ is an eta quotient this implies
\begin{align*}
\psi\left( -\frac{1}{z} \right) = 2^{12} \phi \left(\frac{z}{2} \right).
\end{align*}
Transformation properties of Eisenstein series easily imply that
\begin{align*}
S_4\left(-\frac{1}{z} \right) &= \frac{z^4}{240} \left( E_4(z)-\frac{1}{16}E_4(z/2) \right), \\
F_2\left( -\frac{1}{z} \right) &= -\frac{z^2}{2} F_2 \left( \frac{z}{2} \right). \notag
\end{align*}
Next consider $z \rightarrow \frac{z}{-z+1}$. To compute $\eta \left(\frac{z}{-z+1} \right)$ we use, in addition to the transformation law above, the fact that
\begin{align*}
\eta \left( z + \frac{1}{2} \right) = \frac{e^{2\pi i/48}\eta^3(2z)}{\eta(z)\eta(4z)}.
\end{align*}
Standard computations then show that
\begin{align*}
\psi\left( \frac{z}{-z+1} \right) = -2^{12}\phi(z)\psi \left( \frac{z}{2} \right).
\end{align*}
By definition, 
\begin{align*}
S_4 \left(\frac{z}{-z+1} \right) = \frac{1}{240} \left(E_4 \left(\frac{z}{-z+1} \right)-E_4\left(\frac{2z}{-z+1}\right) \right),
\end{align*}
and a straightforward calculation shows that
\begin{align*}
E_4 \left(\frac{z}{-z+1} \right) = (z-1)^4 E_4(z).
\end{align*}
Observe that
\begin{align*}
E_4\left(\frac{2z}{-z+1}\right) &= E_4 \left(- \frac{1}{\frac{z-1}{2z}} \right) = \left( \frac{z-1}{2z} \right)^4 E_4 \left( \frac{z-1}{2z} \right).
\end{align*}
We see that $\frac{z-1}{2z}$ corresponds to the matrix $\begin{pmatrix}
1 & -1 \\
2 & 0
\end{pmatrix}$
acting on $z$. Certainly
\begin{align*}
\begin{pmatrix}
1 & -1 \\
2 & 0
\end{pmatrix}
=
\begin{pmatrix}
1 & 1 \\
2 & 3
\end{pmatrix}
\begin{pmatrix}
1 & -3 \\
0 & 2
\end{pmatrix},
\end{align*}
so if we set $w = \frac{z-3}{2}$ we have
\begin{align*}
E_4 \left( \frac{z-1}{2z} \right) &= E_4 \left(\frac{w+1}{2w+3} \right) = (2w+3)^4 E_4(w) \\
&= z^4 E_4 \left(\frac{z-3}{2} \right) = z^4 E_4 \left(\frac{z}{2} + \frac{1}{2} \right), \notag
\end{align*}
since
\begin{align*}
\begin{pmatrix}
1 & 1 \\
2 & 3
\end{pmatrix} \in \text{SL}_2(\mathbb{Z}).
\end{align*}
Putting this all together gives
\begin{align*}
S_4 \left( \frac{z}{-z+1} \right) = \frac{(z-1)^4}{240} \left(E_4(z) - \frac{1}{16} E_4 \left(\frac{z}{2} + \frac{1}{2} \right) \right).
\end{align*}

We do a similar calculation with $F_2 \left( \frac{z}{-z+1} \right)$ and find that
\begin{align*}
F_2 \left( \frac{z}{-z+1} \right) &= (z-1)^2 \left(\frac{1}{2}E_2\left(\frac{z}{2} + \frac{1}{2} \right)-E_2(z) \right).
\end{align*}
Putting everything together yields the required generating functions:
\begin{align*}
F_{k,m}(z) &= \int_{-1/2}^{1/2} \frac{\psi(\tau) F_2(\tau)}{(S_4^\ell \psi F_{k'})(\tau)} e^{2\pi i m \tau} \cdot \frac{ S_4^\ell(z) \psi(z) F_2(z)}{\psi(\tau)-\psi(z)} du, \\
F_{k,m}\left( -\frac{1}{z} \right) &= \int_{-1/2}^{1/2} \frac{\psi(\tau) F_2(\tau)}{(S_4^\ell \psi F_{k'})(\tau)} e^{2\pi i m \tau} \cdot \left(\frac{z^4}{240} \left(E_4(z) - \frac{1}{16}E_4 \left(\frac{z}{2} \right) \right) \right)^\ell \notag \\
&\ \ \ \cdot \frac{2^{12}\phi \left(\frac{z}{2} \right) \cdot \frac{-z^2}{2} F_2 \left(\frac{z}{2} \right)}{\psi(\tau)-2^{12}\phi\left(\frac{z}{2} \right)} du, \notag \\
F_{k,m} \left(\frac{z}{-z+1} \right) &= \int_{-1/2}^{1/2} \frac{\psi(\tau) F_2(\tau)}{(S_4^\ell \psi F_{k'})(\tau)} e^{2\pi i m \tau} \cdot \left(\frac{(z-1)^4}{240} \left(E_4(z) - \frac{1}{16} E_4 \left(\frac{z}{2} + \frac{1}{2} \right) \right) \right)^\ell \notag \\
&\ \ \ \cdot \frac{-2^{12}\phi(z) \psi \left(\frac{z}{2} \right) \cdot (z-1)^2 \left(\frac{1}{2}E_2\left(\frac{z}{2} + \frac{1}{2} \right)-E_2(z) \right)}{\psi(\tau)+2^{12}\phi(z) \psi \left(\frac{z}{2} \right)}du \notag.
\end{align*}
We write $z=x+iy$, where we take $y$ a constant to be fixed soon. The coefficients $A_k^{(i)}(m,n)$ are then given by
\begin{align*}
A_{k}^{(1)}(m,n) &= \int_{-1/2}^{1/2} F_{k,m}(z) e^{-2\pi i n z}dx, \\
A_{k}^{(2)}(m,n) &= \int_{-1}^1 F_{k,m} \left(-\frac{1}{z} \right) e^{-\pi i n z}dx, \notag \\
A_{k}^{(3)}(m,n) &= \int_{-1}^1 F_{k,m} \left(\frac{z}{-z+1} \right) e^{-\pi i n z}dx. \notag
\end{align*}
To get upper bounds on $|A_k^{(i)}(m,n)|$ it suffices to bound the appropriate double integrals. For the remainder of the section, we set $v = 1.16$ and $y = .865$, so that $\tau = u+1.16i, z = x+.865i$, where $u,x \in [-1/2,1/2]$. This choice of $v,y$ is identical to that in \cite{JR}. These values of $v$ and $y$ give reasonable bounds, and keep the difference of Hauptmoduln in the denominator of (\ref{Gen Func for Fkm}) far enough from zero. Further, in bounding $\langle F_{k,m},F_{k,m} \rangle$ we require certain infinite series to be convergent, and choosing $y$ to be just less than $\frac{\sqrt{3}}{2}$ makes this possible.

We handle each $A_k^{(i)}(m,n)$ in turn. For a modular form $G = \sum_{n=0}^\infty a(n)q^n$, we write $\tilde{G} = \sum_{n=0}^N a(n)q^n$, and write $RG = G-\tilde{G}$ (we also refer to $RG$ as the tail of the series). Throughout the calculations we understand $N$ to be equal to 100. All computations were performed using SAGE \cite{SAGE} and Mathematica \cite{Math}. (See Section 5 of \cite{GJ} for computations of a similar nature.)

\subsection{Upper Bound on $|A_k^{(1)}(m,n)|$} We have
\begin{align*}
A_{k}^{(1)}(m,n) &= \int_{-1/2}^{1/2} \int_{-1/2}^{1/2} \frac{\psi(\tau) F_2(\tau)}{(S_4^\ell \psi F_{k'})(\tau)} \cdot \frac{ S_4^\ell(z) \psi(z) F_2(z)}{\psi(\tau)-\psi(z)} e^{2\pi i m \tau} e^{-2\pi i n z}  du dx.
\end{align*}
Trivially bounding, we have
\begin{align*}
| A_{k}^{(1)}(m,n) | &\leq e^{-2\pi m \cdot 1.16} e^{2\pi n \cdot .865} \ \text{max}_{|u|,|x| \leq \frac{1}{2}} \left|\frac{F_2(\tau)}{(S_4^\ell F_{k'})(\tau)} \right| \left| \frac{ S_4^\ell(z) \psi(z) F_2(z)}{\psi(\tau)-\psi(z)} \right|.
\end{align*}
We find the appropriate upper and lower bounds on each piece separately, and then combine the bounds after all the calculations.

The Fourier expansion of $F_2(z)$ is given by
\begin{align*}
F_2(z) = 1+24 \sum_{n=1}^\infty \left( \sum_{\substack{d|n \\ d \text{ odd}}}d \right) q^n.
\end{align*}
It is clear that we have
\begin{align*}
|\tilde{F_2}(z)| &\leq 1 + 24\sum_{n=1}^N \left( \sum_{\substack{d|n \\ d \text{ odd}}}d \right) e^{-2\pi n \cdot .865} \leq 1.10514, \\
|\tilde{F_2}(\tau)| &\leq 1 + 24\sum_{n=1}^N \left( \sum_{\substack{d|n \\ d \text{ odd}}}d \right) e^{-2\pi n \cdot 1.16} \leq 1.01642. \notag
\end{align*}
To bound $RF_2$, we have
\begin{align*}
\sum_{\substack{d|n \\ d \text{ odd}}} d \leq \sigma(n) < n+n^2,
\end{align*}
and thus
\begin{align*}
|RF_2(\tau)|,|RF_2(z)| \leq \sum_{n=N+1}^\infty (n+n^2)e^{-2\pi n \cdot .865} \leq 4.15 \cdot 10^{-235},
\end{align*}
where we have used the technique of taking derivatives of geometric series to evaluate the sum. (Hereafter we do not mention the contribution of tails of series, since our choice of $N$ lets us easily show the tails are always negligible). Hence
\begin{align*}
|F_2(z)| &\leq 1.10514, \\
|F_2(\tau)| &\leq 1.01642. \notag
\end{align*}
We similarly find that
\begin{align*}
|S_4(z)| \leq 0.00452.
\end{align*}
The next task is getting a lower bound on $|S_4(\tau)|$. First, we take the derivative of $S_4(\tau)$ to get
\begin{align*}
\frac{d}{d\tau}S_4(\tau) = 2 \pi i \sum_{n=1}^\infty n \left(\sigma_3(n) - \sigma_3 \left( \frac{n}{2} \right) \right) e^{2\pi i n \tau},
\end{align*}
from which it follows that
\begin{align*}
\left|\frac{d}{d\tau}S_4(\tau) \right| \leq 2 \pi \sum_{n=1}^\infty (n^3+n^5)e^{-2\pi n \cdot 1.16} \leq 0.00871.
\end{align*}
We evaluate $\tilde{S_4}(\tau)$ at the points $\tau = \frac{n}{20000} + 1.16i$ for $n \in [-10000,10000]$ and find that the minimum absolute value at these points is greater than $0.000679.$ Hence the smallest possible value of $|\tilde{S_4}(\tau)|$ is given by
\begin{align*}
0.000679 - \frac{0.00871}{40000} \geq .00067.
\end{align*}
Subtracting off the tail, we see that
\begin{align*}
|S_4(\tau)| \geq .00067.
\end{align*}
It remains to handle $\psi(z)$ and $\psi(\tau)$. Taking the derivative, we have
\begin{align*}
\frac{d}{dz} \psi(z) = -2\pi i q^{-1} + 2\pi i \sum_{n=1}^\infty n s(n)q^n,
\end{align*}
and from this we get
\begin{align*}
\left|\frac{d}{dz} \psi(z) \right| &\leq 2\pi \left(e^{2\pi \cdot .865} + \sum_{n=1}^N n|s(n)|e^{-2\pi n \cdot .865} \right) + 2\pi \sum_{n=N+1}^\infty n|s(n)| e^{-2\pi n \cdot .865}.
\end{align*}
The first sum is finite, and we can evaluate it directly, getting
\begin{align*}
2\pi \left(e^{2\pi \cdot .865} + \sum_{n=1}^N n|s(n)|e^{-2\pi n \cdot .865} \right) \leq 1448.69599.
\end{align*}
For the infinite sum, we can bound $|s(n)|$ using Theorem \ref{bounds for Hauptmods}. Thus we have
\begin{align*}
2\pi \sum_{n=N+1}^\infty n|s(n)| e^{-2\pi n \cdot .865} &\leq 2\pi \cdot 0.9 \sum_{n=1}^\infty n^{12} e^{2\pi \sqrt{2n}-2\pi n \cdot .865}.
\end{align*}
We bound the exponent by
\begin{align*}
2\pi \sqrt{2n}-2\pi n \cdot .865 &= 2\pi n \left(-.865 + \sqrt{\frac{2}{n}} \right) \leq 2\pi n \left(-.865 + \sqrt{\frac{2}{N+1}} \right) \\
&\leq -2\pi n \cdot .724, \notag
\end{align*}
and summing the series we find that as usual the tail is negligible. It follows that
\begin{align*}
\left|\frac{d}{dz} \psi(z) \right| \leq 1448.69599.
\end{align*}

As above, we now calculate $|\tilde{\psi}(z)|$ on a grid of points. We look at points with $x = -\frac{1}{2} + \frac{n}{20000}, n \in [-20000,20000]$. We find that that maximum value on this point sample is given by $254.52626$, and thus we have
\begin{align*}
|\tilde{\psi}(z)| \leq 254.52626 + \frac{1448.69599}{40000} \leq 254.56248.
\end{align*}
This gives
\begin{align*}
|\psi(z)| \leq 254.56248.
\end{align*}
We similarly compute a lower bound for $|\psi(\tau)|$, finding that
\begin{align*}
|\psi(\tau)| \geq 1439.51688.
\end{align*}

We now pull all the computations above together to get
\begin{align}\label{Coeff Bound for A1}
|A_{k}^{(1)}(m,n)| &\leq e^{-2\pi m \cdot 1.16} e^{2\pi n \cdot .865} \ \text{max}_{|u|,|x| \leq \frac{1}{2}} \left|\frac{F_2(\tau)}{(S_4^\ell F_{k'})(\tau)} \right| \left| \frac{ S_4^\ell(z) \psi(z) F_2(z)}{\psi(\tau)-\psi(z)} \right| \\
&\leq e^{-2\pi m \cdot 1.16} e^{2\pi n \cdot .865} \cdot \frac{1.01642}{(.00067)^\ell} \cdot \frac{(.00452)^\ell (254.56248) (1.10514)}{1439.51688-254.56248} \notag \\
&\leq 0.242 \cdot 6.747^\ell \cdot e^{-2\pi m \cdot 1.16} e^{2\pi n \cdot .865}. \notag
\end{align}

\subsection{Upper Bound on $|A_k^{(2)}(m,n)|$} Here we have
\begin{align*}
|A_k^{(2)}(m,n)| &\leq 2^{13} e^{-2\pi m \cdot 1.16} e^{\pi n \cdot .865} \ \text{max}_{|u|,|x| \leq \frac{1}{2}} \left|\frac{F_2(\tau)}{(S_4^\ell F_{k'})(\tau)} \right| \frac{\left|\phi \left(\frac{z}{2} \right) \right| \cdot \left|\frac{z^2}{2} F_2 \left(\frac{z}{2} \right)\right|}{\left|\psi(\tau)-2^{12}\phi\left(\frac{z}{2} \right) \right|} \\
&\ \ \ \cdot \left|\frac{z^4}{240} \left(E_4(z) - \frac{1}{16}E_4 \left(\frac{z}{2} \right) \right) \right|^\ell \notag.
\end{align*}
We have already bounded everything involving $\tau$, so we only need to bound the parts involving $z$. The bound on $|z|$ is straightforward, since
\begin{align*}
|z| \leq |1/2+.865i| \leq .99912.
\end{align*}
Using this and arguing as above we find that
\begin{align*}
&\left|\frac{z^2}{2} F_2 \left( \frac{z}{2} \right) \right| \leq 1.35659, \\
&\left|\frac{z^4}{240} \left(E_4(z) - \frac{1}{16}E_4 \left(\frac{z}{2} \right) \right) \right| \leq .0042.
\end{align*}
Getting an upper bound on $|\phi(\frac{z}{2})|$ is entirely analogous to the calculations we have done before. We derive the bound
\begin{align*}
\left| \phi \left( \frac{z}{2} \right) \right| \leq 0.34276.
\end{align*}
Combining everything from above, we have
\begin{align}\label{Coeff Bound for A2}
|A_k^{(2)}(m,n)| &\leq e^{-2\pi m \cdot 1.16} e^{\pi n .865} \cdot 2^{13} \frac{1.01642}{(.00067)^\ell} \cdot \frac{(0.34276)(1.35659)(0.0042)^\ell}{1439.51688-2^{12} \cdot 0.34276} \\
&\leq 108.842 \cdot 6.269^\ell \cdot e^{-2\pi m \cdot 1.16} e^{\pi n \cdot .865}. \notag
\end{align}

\subsection{Upper Bound on $|A_k^{(3)}(m,n)|$} From equations above we have that
\begin{align*}
|A_k^{(3)}(m,n)| &\leq 2 e^{-2\pi m \cdot 1.16} e^{\pi n \cdot .865} \ \text{max}_{|x|,|u|\leq \frac{1}{2}} \left|\frac{F_2(\tau)}{(S_4^\ell F_{k'})(\tau)} \right| \left|\frac{(z-1)^4}{240} \left(E_4(z) - \frac{1}{16} E_4 \left(\frac{z}{2} + \frac{1}{2} \right) \right) \right|^\ell \\
&\ \ \ \cdot \frac{2^{12} \left|\phi(z) \psi \left(\frac{z}{2} \right) \right| \left|(z-1)^2 \left(\frac{1}{2}E_2\left(\frac{z}{2} + \frac{1}{2} \right)-E_2(z) \right) \right|}{\left|\psi(\tau)+2^{12}\phi(z) \psi \left(\frac{z}{2} \right) \right|} \notag.
\end{align*}
We first get a bound for $\frac{(z-1)^4}{240} \left(E_4(z) - \frac{1}{16} E_4 \left(\frac{z}{2} + \frac{1}{2} \right) \right)$. As usual, we bound its derivative, compute values of the truncated sum on a grid of points, and compensate for the effects of the derivative and the tail of the series. This gives
\begin{align*}
\left|\frac{(z-1)^4}{240} \left(E_4(z) - \frac{1}{16} E_4 \left(\frac{z}{2} + \frac{1}{2} \right) \right) \right| \leq 0.044063.
\end{align*}

Arguing similarly for $(z-1)^2 \left(\frac{1}{2}E_2 \left(\frac{z}{2} + \frac{1}{2} \right) -E_2(z)\right)$, we find that
\begin{align*}
\left|(z-1)^2 \left(\frac{1}{2}E_2 \left(\frac{z}{2} + \frac{1}{2} \right) -E_2(z)\right) \right| \leq 2.69392.
\end{align*}
It remains to deal with $\phi(z)$ and $\psi \left( \frac{z}{2} \right)$. Again the computations are routine, and we have
\begin{align*}
\left| \phi(z) \right| &\leq 0.00486, \\
\left| \psi \left( \frac{z}{2} \right) \right| &\leq 15.95619. \notag
\end{align*}

From our bounds above, we have
\begin{align}\label{Coeff Bound for A3}
|A_k^{(3)}(m,n)| &\leq e^{-2\pi m \cdot 1.16} e^{\pi n .865} 2^{13} \frac{1.01642}{.00067^\ell} \cdot \frac{.044063^\ell (2.69392)(.00486 \cdot 15.95619)}{1439.51688-2^{12} \cdot .00486 \cdot 15.95619} \\
&\leq 1.551 \cdot 65.766^\ell \cdot e^{-2\pi m \cdot 1.16} e^{\pi n .865}. \notag
\end{align}

Now we are in a position to get an upper bound on $\langle F_{k,m}, F_{k,m} \rangle$. We have
\begin{align*}
\langle F_{k,m},F_{k,m} \rangle = \frac{1}{\pi}\sum_{i=1}^3 \int_{\mathcal{F}} F_{k,m}|_{\alpha_i^{-1}} (z) \overline{F_{k,m}|_{\alpha_i^{-1}} (z)} y^{k-2} dx dy = \frac{1}{\pi} \left( I_1 + I_2 + I_3 \right). \notag
\end{align*}
We bound each of $I_1,I_2,I_3$ separately.

\subsection{Upper Bound on $I_1$}
In this integral we act on $F_{k,m}$ with $\alpha_1^{-1}$, which is just the identity matrix. We write
\begin{align*}
F_{k,m}(z) = q^m + \sum_{n=\ell}^\infty A_k^{(1)}(m,n) q^n,
\end{align*}
and putting this in the integral and integrating over $x$, we get
\begin{align*}
I_1 &= \int_{-1/2}^{1/2} \int_{\sqrt{1-x^2}}^\infty |F_{k,m}(x+iy)|^2 y^{k-2} dy dx \leq \int_{\sqrt{3}/2}^\infty y^{k-2} \int_{-1/2}^{1/2} |F_{k,m}(x+iy)|^2 dx dy \\ 
&= \int_{\sqrt{3}/2}^\infty y^{k-2} \left(e^{-4\pi m y} + \sum_{n=\ell}^\infty |A_k^{(1)}(m,n)|^2 e^{-4\pi n y} \right)dy \notag.
\end{align*}
Changing variables, we find that
\begin{align*}
I_1 &\leq \frac{(k-2)!}{(4\pi m)^{k-1}} + \frac{1}{(4\pi)^{k-1}} \sum_{n=\ell}^\infty \frac{|A_k^{(1)}(m,n)|^2}{n^{k-1}} \int_{2\pi \sqrt{3}n}^\infty u^{k-2}e^{-u}du.
\end{align*}
Thus
\begin{align*}
\int_{2\pi \sqrt{3}n}^\infty u^{k-2}e^{-u}du = e^{-2\pi \sqrt{3}n} \sum_{i=0}^{k-2} \frac{(k-2)!}{i!}(2\pi \sqrt{3}n)^i,
\end{align*}
and it is easy to see that the function
\begin{align*}
\frac{1}{n^{k-1}}\sum_{i=0}^{k-2} \frac{(k-2)!}{i!}(2\pi \sqrt{3}n)^i
\end{align*}
is decreasing as a function of $n$, so
\begin{align*}
I_1 - \frac{(k-2)!}{(4\pi m)^{k-1}} &\leq \frac{1}{(4\pi)^{k-1}} \sum_{n=\ell}^\infty |A_k^{(1)}(m,n)|^2 e^{-2\pi \sqrt{3}n} \frac{(k-2)!}{ \ell^{k-1}} \sum_{i=0}^\infty \frac{(2\pi \sqrt{3}\ell)^i}{i!} \\ 
&\leq \frac{(k-2)! e^{2\pi \sqrt{3}\ell}}{(4\pi\ell)^{k-1}} \sum_{n=\ell}^\infty |A_k^{(1)}(m,n)|^2 e^{-2\pi \sqrt{3}n}. \notag
\end{align*}
Using (\ref{Coeff Bound for A1}) gives
\begin{align*}
\sum_{n=\ell}^\infty |A_k^{(1)}(m,n)|^2 e^{-2\pi \sqrt{3}n} &\leq .242^2 \cdot 6.747^{2\ell} \cdot e^{-4\pi m \cdot 1.16} \sum_{n=\ell}^\infty e^{4\pi n \cdot .865-2\pi \sqrt{3}n} \\
&\leq (4.5763)(45.523)^\ell \cdot e^{-4\pi m \cdot 1.16} e^{-.01288 \ell}, \notag
\end{align*}
and thus
\begin{align*}
I_1 &\leq \frac{(k-2)!}{(4\pi m)^{k-1}} + \frac{(4.5763)(k-2)! (45.523)^\ell}{(4\pi \ell)^{k-1}} \cdot e^{-4\pi m \cdot 1.16} e^{10.86992 \ell}.
\end{align*}

\subsection{Upper Bound on $I_2$} Here we act on $F_{k,m}$ with $\alpha_2^{-1}$, and thus
\begin{align*}
F_{k,m}|_{\alpha_2^{-1}}(z) = \frac{1}{z^k} F_{k,m} \left(-\frac{1}{z} \right) = \frac{1}{z^k} \sum_{n=1}^\infty A_k^{(2)}(m,n) q^{n/2}.
\end{align*}
Putting this into the integral and rearranging, we have
\begin{align*}
I_2 &= \sum_{r,s \geq 1} A_k^{(2)}(m,r) \overline{A_k^{(2)}(m,s)} \int_{-1/2}^{1/2} e^{\pi i x (r-s)} \int_{\sqrt{1-x^2}}^\infty \frac{y^{k-2}}{(x^2+y^2)^k} e^{-\pi y(r+s)}dy dx.
\end{align*}
We easily obtain
\begin{align*}
I_2 &\leq \frac{1}{\pi} \left( \frac{2\sqrt{3}}{3} \right)^{k+2} \sum_{r,s \geq 1} |A_k^{(2)}(m,r)| |A_k^{(2)}(m,s)| \frac{e^{-\pi \sqrt{3}/2 (r+s)}}{r+s},
\end{align*}
and (\ref{Coeff Bound for A2}) yields
\begin{align*}
I_2 &\leq \frac{(108.842)^2}{\pi} \left( \frac{2\sqrt{3}}{3} \right)^{k+2} (6.269)^{2\ell} e^{-4\pi m \cdot 1.16} \sum_{r,s \geq 1} e^{\pi r \cdot .865} e^{\pi s \cdot .865} \frac{e^{-\pi \sqrt{3}/2 (r+s)}}{r+s}.
\end{align*}
It remains to bound the double sum. Since $r,s$ are positive integers, we have
\begin{align*}
\frac{1}{r+s} \leq \frac{1}{\sqrt{2rs}}
\end{align*}
so the double sum is bounded above by
\begin{align*}
\frac{1}{\sqrt{2}} \left(\sum_{n=1}^\infty \frac{e^{-\frac{1}{2}\sqrt{3}\pi n + \pi \cdot .865 n}}{n^{1/2}} \right)^2.
\end{align*}
We explicitly calculate the partial sum with 10000 terms and then bound the contribution of the tail, which gives
\begin{align*}
\sum_{n=1}^\infty \frac{e^{-\frac{1}{2}\sqrt{3}\pi n + \pi \cdot .865 n}}{n^{1/2}} &\leq 29.77087.
\end{align*}
Putting everything together, we obtain
\begin{align*}
I_2 &\leq 2363259 \left( \frac{2\sqrt{3}}{3} \right)^{k+2} (6.269)^{2\ell} e^{-4\pi m \cdot 1.16}.
\end{align*}

\subsection{Upper Bound on $I_3$}
We have
\begin{align*}
I_3 &= \int_{\mathcal{F}} F_{k,m}|_{\alpha_3^{-1}} (z) \overline{F_{k,m}|_{\alpha_3^{-1}} (z)} y^{k-2} dx dy,
\end{align*}
which implies
\begin{align*}
I_3 &\leq \frac{1}{\pi} \sum_{r,s \geq 1} |A_k^{(3)}(m,r)| |A_k^{(3)}(m,s)| \frac{e^{-\frac{1}{2}\sqrt{3}\pi (r+s)}}{r+s}.
\end{align*}
Applying (\ref{Coeff Bound for A3}) and proceeding as before, we have
\begin{align*}
I_3 &\leq 480 \cdot 65.766^{2\ell} \cdot e^{-4\pi m \cdot 1.16}.
\end{align*}

Putting everything together and simplifying, we have
\begin{align}\label{Upper Bound on Inner Product}
\langle F_{k,m}, F_{k,m} \rangle &\leq \frac{4(k-2)!}{(4\pi)^k m^{k-1}} + \frac{e^{2.908}(k-2)! (17.094)^k}{(4\pi)^{k}(\frac{k}{4}-1)^{k-1}}\cdot e^{-4\pi m \cdot 1.16} + e^{13.817} (1.828)^k e^{-4\pi m \cdot 1.16} \\ 
&\ \ \ + e^{5.03} (8.11)^k e^{-4\pi m \cdot 1.16}. \notag
\end{align}

\section{Proof of Theorem \ref{Main Theorem}}\label{proof of main thm section}
Let $n=n(k)$ be the dimension of $S_k(\text{SL}_2(\mathbb{Z}))$ and let $t = t(k)$ be the dimension of $S_k^{\text{new}}(2)$. Let $\{f_i\}_{i=1}^n$ be a basis of normalized Hecke eigenforms for $S_k(\text{SL}_2(\mathbb{Z}))$, and let $\{g_j\}_{j=1}^t$ be a basis of normalized newforms for $S_k^{\text{new}}(2)$. We would like to write $F_{k,m}$ as a linear combination of $f_i,f_i|V_2$, and $g_j$ (i.e. in terms of oldforms and newforms). Directly writing
\begin{align*}
F_{k,m} = \sum_i c_i f_i + \sum_i d_i f_i|V_2 + \sum_j e_j g_j
\end{align*}
with constants $c_i,d_i,e_j$ leads to problems, so we write $F_{k,m}$ with respect to a different basis.

To motivate our choice of basis we require some results on Petersson inner products. Specifically, we need to be able to evaluate $\langle f_i|V_2 , f_j | V_2 \rangle$ and $\langle f_i, f_j|V_2 \rangle$. We define a matrix $M$ by
\begin{align*}
M = \begin{pmatrix}
2 & 0 \\
0 & 1
\end{pmatrix}.
\end{align*}
Let $f_i,f_j$ be eigenforms in $S_k(\text{SL}_2(\mathbb{Z}))$. Then
\begin{align*}
\langle f_i|V_2 , f_j | V_2 \rangle &= 2^{-k} \langle f_i|_M , f_j |_M \rangle = 2^{-k} \langle f_i,f_j \rangle.
\end{align*}
This implies $\langle f_i|V_2 , f_j | V_2 \rangle = 0$ if $i \neq j$. Further, from Lemma 5 of \cite{Rou2} we have that
\begin{align*}
\langle f_i,f_i|V_2 \rangle = \frac{a_i(2)}{3 \cdot 2^{k-1}} \langle f_i, f_i \rangle,
\end{align*}
where $a_i(2)$ is the coefficient of $q^2$ in the Fourier expansion of $f_i$. Note that $a_i(2)$ is real since $f_i$ is a Hecke eigenform. An easy modification of the proof of this lemma shows that $\langle f_i,f_j|V_2 \rangle=0$ if $i \neq j$.

We would like to write $F_{k,m}$ with respect to a basis in which the forms are all orthogonal to each other with respect to the Petersson inner product, the $n$th coefficient of a basis element is bounded by $C d(n) n^{\frac{k-1}{2}}$ for some absolute constant $C \geq 1$, and the Petersson norm of each basis element is about the same order of magnitude. Using a basis of this form was suggested to us by Rouse (private communication). We choose our basis to consist of the forms $\{g_j\}_{j=1}^t, \{f_i|V_2\}_{i=1}^n, \{f_i-\kappa_if_i|V_2\}_{i=1}^n$, where $\kappa_i$ is chosen so that $f_i-\kappa_if_i|V_2$ is orthogonal to $f_i|V_2$. An easy calculation shows that $\kappa_i = \frac{2}{3} a_i(2)$. Now we write
\begin{align*}
F_{k,m} = \sum_i c_i \left(f_i - \kappa_i f_i|V_2 \right) + \sum_i d_i \left(2^{k/2} f_i|V_2 \right) + \sum_j e_j g_j.
\end{align*}
Note that we may take $c_i,d_i,e_j \in \mathbb{R}$ because $F_{k,m},f_i,g_j$ have real coefficients. Since the basis elements are orthogonal to each other we have
\begin{align*}
\langle F_{k,m}, F_{k,m} \rangle &= \sum_i c_i^2 \langle f_i - \kappa_i f_i|V_2, f_i - \kappa_i f_i|V_2 \rangle + \sum_i d_i^2 2^k \langle f_i|V_2, f_i | V_2 \rangle + \sum_j e_j^2 \langle g_j,g_j \rangle.
\end{align*}
Note that
\begin{align*}
\langle f_i - \kappa_i f_i|V_2, f_i - \kappa_i f_i|V_2 \rangle &= \langle f_i - \kappa_i f_i|V_2, f_i \rangle = \left(1 - \frac{4}{9} \frac{a_i^2(2)}{2^k} \right) \langle f_i,f_i \rangle \\
&\geq \frac{1}{9} \langle f_i, f_i \rangle, \notag
\end{align*}
where for the inequality we have used $|a_i(2)| \leq 2 \cdot 2^{\frac{k-1}{2}}$.
From work of Rouse \cite{Rou1} we have the inequality
\begin{align*}
\langle f_i,f_i \rangle &\geq \frac{6}{\pi^2} \cdot \frac{\Gamma(k)}{(4\pi)^k} \cdot \frac{1}{64 \log k},
\end{align*}
and from Lemma \ref{Newform Lower Bound} we have the inequality
\begin{align*}
\langle g_j, g_j \rangle &\geq \frac{6}{\pi^2} \cdot \frac{1}{1+\frac{1}{2}} \cdot \frac{\Gamma(k)}{(4\pi)^k} \cdot \frac{1}{86 \log k}.
\end{align*}
Taking these inequalities we see that
\begin{align*}
\langle F_{k,m}, F_{k,m} \rangle &\geq \sum_i \left(c_i^2 + d_i^2 \right) \frac{1}{9} \langle f_i,f_i \rangle + \sum_j e_j^2 \langle g_j,g_j \rangle \\
&\geq \sum_i (c_i^2 + d_i^2) \frac{1}{96\pi^2} \cdot \frac{\Gamma(k)}{(4\pi)^k \log k} + \sum_j e_j^2 \frac{2}{43 \pi^2} \cdot \frac{\Gamma(k)}{(4\pi)^k \log k} \notag \\
&\geq \left(\sum_i c_i^2 + \sum_i d_i^2 + \sum_j e_j^2 \right) \frac{1}{96\pi^2} \cdot \frac{\Gamma(k)}{(4\pi)^k \log k}. \notag
\end{align*}
This obviously implies that
\begin{align*}
\sum_i c_i^2 + \sum_i d_i^2 + \sum_j e_j^2 &\leq \langle F_{k,m}, F_{k,m} \rangle \cdot 96 \pi^2 \cdot \frac{(\log k) (4\pi)^k}{(k-1)!}.
\end{align*}
Recall that $n$ is the dimension of $S_k(\text{SL}_2(\mathbb{Z}))$ and $t$ is the dimension of $S_k^{\text{new}}(2)$. We have $n \leq \frac{k}{12}$, and specializing Theorem 1 of \cite{Mar} gives
\begin{align*}
t &= k - 1 - \left\lfloor \frac{k}{4} \right\rfloor - 2\left\lfloor \frac{k}{3} \right\rfloor \leq 2 + \frac{k}{12}.
\end{align*}
By the Cauchy-Schwarz inequality, we have
\begin{align*}
\sum_{i=1}^n |c_i| + \sum_{i=1}^n |d_i| + \sum_{j=1}^t |e_j| &\leq \sqrt{2n + t} \cdot \sqrt{\sum_i c_i^2 + \sum_i d_i^2 + \sum_j e_j^2} \\
&\leq \sqrt{\frac{k}{4}+2} \cdot \sqrt{\sum_i c_i^2 + \sum_i d_i^2 + \sum_j e_j^2}. \notag
\end{align*}
Using the triangle inequality and (\ref{Upper Bound on Inner Product}) we obtain
\begin{align*}
\sum_{i=1}^n |c_i| + \sum_{i=1}^n |d_i| + \sum_{j=1}^t |e_j| &\leq \sqrt{\log k} \Bigg(\frac{44}{m^{\frac{k-1}{2}}} + \frac{e^{4.601}(6.274)^k}{(\frac{k}{4}-1)^{\frac{k-1}{2}}} e^{-2\pi m \cdot 1.16} \\
&+ \frac{e^{10.057}(4.793)^k}{\sqrt{(k-2)!}} e^{-2\pi m \cdot 1.16} + \frac{e^{5.663}(10.096)^k}{\sqrt{(k-2)!}}e^{-2\pi m \cdot 1.16} \Bigg). \notag
\end{align*}
We must determine the absolute constant $C \geq 1$ such that the $n$th coefficients of $g_j, 2^{k/2}f_i|V_2$, and $f_i - \kappa_i f_i|V_2$ are bounded above by $C d(n) n^{\frac{k-1}{2}}$. The $g_j$ are newforms so their coefficients are bounded above by $d(n) n^{\frac{k-1}{2}}$. Let $f_i$ be as above with Fourier expansion given by
\begin{align*}
f_i(z) = \sum_{n=1}^\infty a_i(n) q^n.
\end{align*}
Then $f_i|V_2 = \sum_{n=1} a(n) q^{2n}$, so the $2n$th coefficient of $2^{k/2} f_i|V_2$ is bounded above by
\begin{align*}
\left|2^{k/2} a_i(n) \right| &\leq 2^{k/2} d(n) n^{\frac{k-1}{2}} = \sqrt{2} \ d(n) (2n)^{\frac{k-1}{2}} \leq \sqrt{2} \ d(2n) (2n)^{\frac{k-1}{2}}.
\end{align*}
Arguing similarly, we see that the coefficients of $f_i - \kappa_i f_i|V_2$ are bounded above by $\frac{7}{3} d(n) n^{\frac{k-1}{2}}$. Hence we may take $C = \frac{7}{3}$. This implies that the absolute value of the coefficient of $q^n$ in $F_{k,m}$ is bounded above by
\begin{align*}
\sqrt{\log k} \Bigg(\frac{103}{m^{\frac{k-1}{2}}} &+ \frac{e^{5.449}(6.274)^k}{(\frac{k}{4}-1)^{\frac{k-1}{2}}} e^{-2\pi m \cdot 1.16} \\
&+ \frac{e^{10.905}(4.793)^k}{\sqrt{(k-2)!}} e^{-2\pi m \cdot 1.16} + \frac{e^{6.511}(10.096)^k}{\sqrt{(k-2)!}}e^{-2\pi m \cdot 1.16} \Bigg) d(n) n^{\frac{k-1}{2}}. \notag
\end{align*}
Now define $B(k)$ by
\begin{align*}
B(k) &= \frac{e^{5.449}(6.274)^k}{(\frac{k}{4}-1)^{\frac{k-1}{2}}} + \frac{e^{10.905}(4.793)^k}{\sqrt{(k-2)!}} + \frac{e^{6.511}(10.096)^k}{\sqrt{(k-2)!}},
\end{align*}
so that the $n$th coefficient of $F_{k,m}$ is bounded above in absolute value by
\begin{align*}
\sqrt{\log k} \left(\frac{103}{m^{\frac{k-1}{2}}} + B(k)e^{-2\pi m \cdot 1.16} \right) d(n) n^{\frac{k-1}{2}}.
\end{align*}
Let $G \in S_k(2)$ be given by
\begin{align*}
G(z)  = \sum_{m=1}^{\ell - 1} a(m) F_{k,m} = \sum_{n=1}^\infty a(n) q^n.
\end{align*}
Applying the triangle inequality gives
\begin{align*}
|a(n)| &\leq \sqrt{\log k} \left(103\sum_{m=1}^{\ell-1} \frac{|a(m)|}{m^{\frac{k-1}{2}}} + B(k) \sum_{m=1}^{\ell -1} |a(m)| e^{-7.288m} \right) d(n) n^{\frac{k-1}{2}},
\end{align*}
which yields Theorem \ref{Main Theorem}.

\subsection*{Acknowledgements} The authors thank Jeremy Rouse for very helpful conversations. This work was partially supported by a grant from the Simons Foundation (\#281876 to Paul Jenkins). The
second author thanks the Brigham Young University Mathematics Department for support of this research, and the ORCA Mentoring Environment Grant for financial assistance.

\bibliographystyle{amsplain}

\end{document}